\documentclass[11pt]{amsart}
\usepackage[utf8]{inputenc}
\usepackage{graphicx,color,verbatim}
\usepackage{amscd,amsmath,amsfonts,amssymb}
\usepackage{mathrsfs}
\textwidth=6in \textheight=9.5in \topmargin=-0.5cm
\oddsidemargin=0.5cm \evensidemargin=0.5cm
\newtheorem{theorem}{Theorem}
\newtheorem{lemma}[theorem]{Lemma}

\newtheorem{remark}[theorem]{Remark}

\newtheorem{example}{Example}

\numberwithin{equation}{section}
\newcommand{\R}{\mathbb R}
\newcommand{\N}{\mathbb N}

%%%%%%%%%%%%%%%%%%%%%%%%%%%%%%%%%%%%%%%%%%%%%%%%%%%%%%%%%%%%%%%%%%%%%%
\begin{document}
\hfill\today\bigskip

\title[]{Existence results for a critical fractional equation}

%\thanks{The authors are members of the Gruppo Nazionale per l'Analisi Matematica, la Probabilit\`{a} e le loro Applicazioni (GNAMPA) of the Istituto Nazionale di Alta Matematica (INdAM)}

\author[H. Hajaiej]{Hichem Hajaiej}
\address{New York University, Shanghai\\
1555 Century Avenue\\
Pudong New District, Shanghai 200122, China}
\email{\tt hichem.hajaiej@gmail.com}

\author[G. Molica Bisci]{Giovanni Molica Bisci}
\address{Dipartimento P.A.U. \\
Università degli Studi Mediterranea di Reggio Calabria\\
Salita Melissari - Feo di Vito\\
89124 Reggio Calabria, Italy}
\email{\tt gmolica@unirc.it}

\author[L. Vilasi]{Luca Vilasi}
\address{Department of Mathematical and Computer Sciences, Physical Sciences and Earth Sciences\\
University of Messina\\
Viale F. Stagno d'Alcontres, 31\\
98166 Messina, Italy}
\email{\tt lvilasi@unime.it}

\keywords{Critical nonlinearities, singular nonlinearities, integro-differential operators, fractional Laplacian, existence \\
\phantom{aa} 2010 AMS Subject Classification: Primary: 49J35, 35A15, 35S15;
Secondary: 47G20, 45G05.}

\begin{abstract}
We are concerned with existence results for a critical problem of Br\'{e}zis-Nirenberg--type driven by an integro-differential operator of fractional nature. The latter includes, for a specific choice of the kernel, the usual fractional Laplacian. Under mild assumptions on the subcritical part of the nonlinearity, we provide first the existence of one weak solution through direct minimization of the energy in a small ball of a certain fractional Sobolev space. This approach remains still valid when adding small singular terms. We finally show that for appropriate choices of the parameters involved the mountain-pass approach is also applicable and yields another existence result.
\end{abstract}

\maketitle

%\tableofcontents

%%%%%%%%%%%%%%%%%%%%%%%%%%%%%%%%%%%%%%%%%%%%%
\section{Introduction}\label{sec:introduzione}
In the present paper we explore the existence of weak solutions to the following Br\'{e}zis-Nirenberg--type problem:
\begin{equation}\tag{$P_{\lambda,\mu}$}\label{problema}
\left\{
\begin{array}{ll}
-\mathcal L_K u = \mu|u|^{2^*-2}u +\lambda g(u) & \mbox{ in } \Omega\\
u=0 & \mbox{ in } \R^n\setminus \Omega,
\end{array} \right.
\end{equation}
where $n>2s$, $s\in(0,1)$, $\Omega\subset\R^n$ is open, bounded and with Lipschitz boundary, $\lambda,\mu$ are positive real parameters,
\begin{equation}\label{2star}
2^*:=\frac{2n}{n-2s}
\end{equation}
is the fractional critical Sobolev exponent and $g:\R\to\R$ is continuous with subcritical growth. The leading term $\mathcal L_K$ is a nonlocal fractional operator of order $s$ defined by
$$
\mathcal L_K u(x):= \int_{\R^n}(u(x+y)+u(x-y)-2u(x))K(y)dy, \quad x\in \R^n,
$$
where the function $K:\R^n\setminus\{0\}\rightarrow (0,+\infty)$ satisfies:
\begin{itemize}
\item[$(k_1)$] $w K\in L^1(\R^n)$, {\it with} $w(x):=\min \{|x|^2, 1\}$;
\item[$(k_2)$] \textit{there exists} $k_0>0$ \textit{such that}
$$
K(x)\geq \frac{k_0}{|x|^{n+2s}},
$$
\mbox{ \it for any } $x\in \R^n \setminus\{0\}$.
\end{itemize}

The paradigm of the above $K$ is given by the singular kernel $K(x):=|x|^{-(n+2s)}$; in this case $\mathcal L_K$ reduces to the fractional Laplace operator defined, pointwise and up to normalization constants, by
$$
(-\Delta)^s u(x):=-\int_{\R^n}\frac{u(x+y)+u(x-y)-2u(x)}{|y|^{n+2s}} dy, \quad x\in \R^n.
$$

Nonlinear problems like \eqref{problema} can be traced back to the well-known and pioneering \cite{brenir1983positive}, where Br\'{e}zis and Nirenberg dealt with the following critical elliptic problem
\begin{equation}\label{1}
\left\{
\begin{array}{ll}
-\Delta u-\lambda u=|u|^{2_*-2}u & \mbox{ in } \Omega\\
u=0 & \mbox{ on } \partial \Omega,
\end{array} \right.
\end{equation}
being $\Omega$ a smooth bounded domain of $\R^n$, $\lambda>0$ and $2_*:=2n/(n-2)$, $n>2$. By guaranteeing a local Palais-Smale condition for the energy associated with \eqref{1} they proved variationally the existence of solutions for $\lambda$ running in appropriate subintervals of $(0,\lambda_1)$, $\lambda_1$ being the first eigenvalue of $-\Delta$ in $H^1_0(\Omega)$. Ever since such a problem has been the object of in-depth studies, also for its close relationship with intriguing topics arising in differential geometry (see for instance the Yamabe problem \cite{yam1960on}), and has been extended to more general leading elliptic operators (cf. \cite{capforpal1985an,garper1991multiplicity,zou2013on} and the references enclosed).

Over the recent years problems à la Br\'{e}zis-Nirenberg have made their way into the nonlocal framework as well (see \cite{mawmol2016a,mospersquyan2016the,serval2013a,serval2015the,tan2011the}). In \cite{serval2015the} Servadei and Valdinoci provided the existence of one non-trivial solution to the problem
\begin{equation}\label{serval}
\left\{
\begin{array}{ll}
-\mathcal L_K u -\lambda u = |u|^{2^*-2}u + f(x,u) & \mbox{ in } \Omega\\
u=0 & \mbox{ in } \R^n\setminus \Omega,
\end{array} \right.
\end{equation}
with $f$ perturbation of lower order of the critical nonlinearity $|u|^{2^*-2}u$, extending all the variational apparatus of \cite{brenir1983positive} to the fractional context. In this paper it is crucial the use of a specific functional setting to prove that the energy satisfies the Palais-Smale condition under a certain threshold related to the best fractional Sobolev constant of the embedding $X_0\hookrightarrow L^{2^*}(\R^n)$ (see Section \ref{funcframework} below for the definition of the space $X_0$).

This approach can be successfully adopted for problem \eqref{problema} as well, with suitable modifications and provided that the parameter $\mu$ coupled with the critical term is small, and is briefly reported in the final Section \ref{mountainpass}. We wish to show, however, that existence results for the problem under exam may be obtained via a completely different method, i.e. by means of direct minimization on small balls of the space $X_0$. This is made possible by proving a local weak lower semicontinuity property of the energy for all $\mu>0$ and sufficiently small $\lambda$. The assumptions required on the subcritical nonlinearity $g$ are quite general and mild. The main trick consists in choosing $\lambda$ inside the range of a suitable rational function which involves, in particular, the sharp constant of the embedding $X_0\hookrightarrow L^{2^*}(\R^n)$.

When $\mathcal L_K$ degenerates into the fractional Laplacian, a particular case of our result can be stated as follows. Denote
\begin{equation}\label{X0s}
X_{0,s}:=\{u\in H^s(\R^n) : u=0 \mbox{ a.e. in } \R^n\setminus\Omega\},
\end{equation}
where $H^s(\R^n)$ is the usual fractional Sobolev space of order $s$, and put
\begin{align*}
\left\| u\right\|_{X_{0,s}}&:=\left(\int_{\R^n}\frac{|u(x)-u(y)|^2}{|x-y|^{n+2s}}dxdy \right)^{1/2},\\
c_{2^*_s}&:=\sup_{u\in X_{0,s}\setminus\{0\}}\frac{\left\| u\right\|_{L^{2^*}(\R^n)}}{\left\| u\right\|_{X_{0,s}}}
\end{align*}
(see Remark \ref{Costante} for further details).

\begin{theorem}\label{casoparticolare}
Let $g:\R\to\R$ be a continuous function for which there exist $a_1,a_2>0$ and $q\in[1,2^*)$ such that
\begin{equation}
|g(t)|\leq a_1 + a_2|t|^{q-1} \quad \mbox{ for all } t\in\R,
\end{equation}
and satisfying
\begin{equation}
\liminf_{t\to 0^+}\frac{\displaystyle\int_0^t g(\tau)d\tau}{t^2} = +\infty.
\end{equation}
If $h:[0,+\infty)\to\R$ is the function defined by
$$
h(t):=\frac{t- c_{2^*_s}^{2^*} t^{2^*-1}}{a_1 c_{2^*_s}|\Omega|^\frac{2^*-1}{2^*} + a_2 c_{2^*_s}^q |\Omega|^\frac{2^*-q}{2^*} t^{q-1}} \quad  \mbox{ for all } t\geq 0,
$$
then there exists an open interval $\displaystyle\Lambda\subseteq\left(0,\max_{[0,+\infty)}h \right)$
such that, for every $\lambda\in\Lambda$, the problem
\begin{equation}\tag{$P_{\lambda}$}\label{problemamodello}
\left\{
\begin{array}{ll}
(-\Delta)^s u = |u|^{2^*-2}u +\lambda g(u) & \mbox{ in } \Omega\\
u=0 & \mbox{ in } \R^n\setminus \Omega,
\end{array} \right.
\end{equation}
admits a non-trivial weak solution in $X_{0,s}$.
\end{theorem}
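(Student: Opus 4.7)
The plan is to realize a non-trivial local minimizer of the energy functional
\begin{equation*}
J_\lambda(u):=\frac{1}{2}\|u\|_{X_{0,s}}^{2} - \frac{1}{2^*}\int_{\Omega}|u|^{2^*}\,dx - \lambda \int_{\Omega} G(u)\,dx, \qquad G(t):=\int_0^t g(\tau)\,d\tau,
\end{equation*}
on a closed ball $\overline{B}_r\subset X_{0,s}$ of suitably small radius $r>0$. The function $h$ is continuous on $[0,+\infty)$, vanishes at $0$ and is strictly positive exactly on $(0,c_{2^*_s}^{-2^*/(2^*-2)})$, hence attains its positive maximum at some interior point; the open interval $\Lambda$ in the statement will be those $\lambda>0$ for which one can choose $r$ simultaneously meeting a weak lower semicontinuity threshold on $\overline B_r$ and a positivity bound for $J_\lambda$ on $\partial B_r$.

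The decisive and most delicate step is the \emph{sequential weak lower semicontinuity of $J_\lambda$ on $\overline{B}_r$}: it fails globally because of the critical term, but can be restored on small balls. Given $u_n\rightharpoonup u$ in $X_{0,s}$ with $\|u_n\|_{X_{0,s}}\leq r$, I extract an a.e.\ convergent subsequence through the compact embedding $X_{0,s}\hookrightarrow L^q(\Omega)$ for $q<2^*$. The Hilbert identity and the Br\'ezis--Lieb lemma then yield
\begin{equation*}
J_\lambda(u_n)-J_\lambda(u)=\tfrac{1}{2}\|u_n-u\|_{X_{0,s}}^{2}-\tfrac{1}{2^*}\|u_n-u\|_{L^{2^*}(\R^n)}^{2^*}+o(1),
\end{equation*}
the continuity of $u\mapsto\int_\Omega G(u)\,dx$ for the strong $L^q(\Omega)$-topology being a consequence of the subcritical growth of $g$. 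The Sobolev inequality bounds the right-hand side below by $\|u_n-u\|_{X_{0,s}}^{2}\bigl(\tfrac{1}{2}-\tfrac{c_{2^*_s}^{2^*}}{2^*}\|u_n-u\|_{X_{0,s}}^{2^*-2}\bigr)+o(1)$, which is non-negative as soon as $r$ stays below a constant depending only on $c_{2^*_s}$. Reflexivity of $X_{0,s}$ then delivers a minimizer $u_\lambda\in\overline B_r$ of $J_\lambda$.

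It remains to trap $u_\lambda$ inside $B_r\setminus\{0\}$. Using $|G(t)|\leq a_1|t|+\tfrac{a_2}{q}|t|^q$, H\"older's inequality and the Sobolev embedding, on $\partial B_r$ one obtains
\begin{equation*}
J_\lambda(u)\geq \tfrac{r^2}{2}-\tfrac{c_{2^*_s}^{2^*}}{2^*}r^{2^*}-\lambda\Big(a_1 c_{2^*_s}|\Omega|^{(2^*-1)/2^*}r+\tfrac{a_2}{q}c_{2^*_s}^{q}|\Omega|^{(2^*-q)/2^*}r^{q}\Big);
\end{equation*}
exploiting $\tfrac{1}{2^*}<\tfrac{1}{2}$ and $\tfrac{a_2}{q}\leq a_2$, this lower bound is itself $\geq \tfrac{r}{2}D(r)\bigl(h(r)-2\lambda\bigr)$, with $D(r)$ the denominator of $h(r)$, and is therefore strictly positive precisely when $\lambda<\tfrac{1}{2}h(r)$; varying $r$ produces a non-empty open subinterval $\Lambda\subseteq(0,\tfrac{1}{2}\max h)\subset(0,\max h)$. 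Conversely, $\liminf_{t\to 0^+}G(t)/t^2=+\infty$ ensures that for any non-negative $\phi\in X_{0,s}\setminus\{0\}$ compactly supported in $\Omega$, $J_\lambda(t\phi)<0$ for all $t>0$ sufficiently small, so $\inf_{\overline B_r}J_\lambda<0=J_\lambda(0)$. Combining the two inequalities, $u_\lambda\in B_r\setminus\{0\}$ is an interior local minimizer of the $C^1$ functional $J_\lambda$, so Fermat's rule gives $J_\lambda'(u_\lambda)=0$, providing the sought non-trivial weak solution in $X_{0,s}$.
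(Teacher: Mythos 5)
Your proof is correct, and while it shares the paper's overall strategy (local minimization of the energy on a small closed ball of $X_{0,s}$, with sequential weak lower semicontinuity restored on that ball via the Hilbert-space identity, the Br\'ezis--Lieb lemma and the compact subcritical embeddings --- this part coincides with Lemma \ref{lemmasemicontinuita}), it takes a genuinely different route in the step that locates the minimizer. The paper argues by contradiction through Lemma \ref{proprietaEtilde}: assuming the minimizer sits on the sphere, it estimates the incremental ratio $\varepsilon^{-1}\bigl(\sup_{B_c(0,\varrho)}\widetilde{\mathcal E}_{\lambda,\mu}-\sup_{B_c(0,\varrho-\varepsilon)}\widetilde{\mathcal E}_{\lambda,\mu}\bigr)$, converts the resulting bound into \eqref{VGa1}, and exhibits a competitor with strictly smaller energy. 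You instead show directly that the energy is strictly positive on $\partial B(0,r)$ whenever $\lambda<\tfrac12 h(r)$ --- your algebra checks out: replacing $1/2^*$ by $1/2$ and $a_2/q$ by $a_2$ only weakens the lower bound, which then factors as $\tfrac r2 D(r)\bigl(h(r)-2\lambda\bigr)$ --- while $(g_2)$ forces $\inf_{\overline B_r}J_\lambda<0=J_\lambda(0)$; together these pin the minimizer in the open punctured ball. Your argument is more elementary and makes the role of $h$ transparent as a sign condition on the sphere, at the cost of a factor $2$: you obtain admissible $\lambda$ only in $(0,\tfrac12 h(r))$, whereas the paper's sharper derivative-type estimate yields the full range $(0,h(r))$; since the theorem only asserts the existence of \emph{some} open interval $\Lambda\subseteq\left(0,\max_{[0,+\infty)}h\right)$, this loss is immaterial. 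One minor point to record in a full write-up: for $u_n,u$ in the closed ball of radius $r$ one only has $\|u_n-u\|_{X_{0,s}}\le 2r$, so the lower semicontinuity threshold should be stated with $2r$ (the paper has the same harmless slip with $\varrho$); this merely shrinks the admissible radius by a fixed constant, and it still suffices to choose $r$ below the minimum of that threshold and the maximizer of $h$, exactly as the paper does when setting $\varrho_{0,\mu}:=\min\{\bar\varrho_\mu,\varrho_{\mu,\max}\}$.
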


The existence of a local minimum continues to be guaranteed if we add a singular term to the perturbation $g$, namely if we look for positive solutions to the problem
\begin{equation}\tag{$\widetilde{P}_{\lambda,\mu}$}\label{problemasingolareintro}
	\left\{
	\begin{array}{ll}
		-\mathcal L_K u = \mu u^{2^*-1} + \lambda \left( u^{r-1} + g(u)\right)  & \mbox{ in } \Omega\\
		u=0 & \mbox{ in } \R^n\setminus \Omega,
	\end{array} \right.
\end{equation}
where $r\in(0,1)$ and $g:[0,+\infty)\to [0,+\infty)$ is as usual continuous and subcritical. In such a case the assumptions on $g$ may further be relaxed, the presence of the singularity being sufficient to ensure that the minimum determined is not trivial.

In this regard an application of our existence theorem reads as follows:
\begin{theorem}\label{casoparticolaresingolare}
	Let $g:[0,+\infty)\to[0,+\infty)$ be a continuous function for which there exist $a_1,a_2>0$ and $q\in[1,2^*)$ such that
	\begin{equation}
	g(t)\leq a_1 + a_2t^{q-1} \quad \mbox{ for all } t\geq 0.
	\end{equation}
	If $r\in(0,1)$ and $k:[0,+\infty)\to\R$ is the function defined by
	$$
	k(t):= \frac{t^{2-r}- c_{2^*_s}^{2^*} t^{2^*-r}}{c_{2^*_s}^r |\Omega|^\frac{2^*-r}{2^*} + a_1 c_{2^*_s}|\Omega|^\frac{2^*-1}{2^*} t^{1-r} + a_2 c_{2^*_s}^q |\Omega|^\frac{2^*-q}{2^*} t^{q-r}} \quad  \mbox{ for all } t\geq 0,
	$$
	then there exists an open interval $\displaystyle\Lambda\subseteq\left(0,\max_{[0,+\infty)}k \right)$
	such that, for every $\lambda\in\Lambda$, the problem
	\begin{equation}\tag{$\widetilde{P}_{\lambda}$}\label{problemamodellosingolare}
	\left\{
	\begin{array}{ll}
	(-\Delta)^s u = u^{2^*-1} + \lambda \left(  u^{r-1} + g(u)\right)  & \mbox{ in } \Omega\\
	u>0 & \mbox{ in } \Omega\\
	u=0 & \mbox{ in } \R^n\setminus \Omega,
	\end{array} \right.
	\end{equation}
	admits a weak solution in $X_{0,s}$.
\end{theorem}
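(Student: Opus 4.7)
\emph{Proof plan.}
Theorem \ref{casoparticolaresingolare} should be derived as the specialization to $\mathcal{L}_K=(-\Delta)^s$ and $\mu=1$ of a general existence result for problem \eqref{problemasingolareintro}, which I intend to establish via direct minimization of the associated energy on a small closed ball of $X_{0,s}$. To \eqref{problemamodellosingolare} I associate the functional
$$
J_\lambda(u):=\tfrac{1}{2}\|u\|_{X_{0,s}}^2-\tfrac{1}{2^*}\int_\Omega (u^+)^{2^*}dx-\lambda\int_\Omega\!\left(\tfrac{(u^+)^r}{r}+G(u^+)\right)dx,
$$
with $G(t):=\int_0^t g(\tau)\,d\tau$. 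The embedding $X_{0,s}\hookrightarrow L^{2^*}(\R^n)$ (with sharp constant $c_{2^*_s}$) and H\"older's inequality on $\Omega$ bound every nonlinear term on the ball $\overline B_t$ by precisely the powers appearing in the denominator of $k$; factoring $t^r$ out of the resulting lower estimate for $J_\lambda$, one sees that $k(t)$ is the ratio whose strict positivity forces $J_\lambda>0$ on the sphere $\|u\|_{X_{0,s}}=t$. Consequently, for $\lambda$ ranging in a small open subinterval $\Lambda$ of $\bigl(0,\max_{[0,+\infty)}k\bigr)$ one obtains $J_\lambda\geq\delta>0$ on $\partial B_{\bar t}$ for a suitable closed ball $\overline B_{\bar t}$ centered near the argmax of $k$.

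The crucial ingredient is the local weak lower semicontinuity of $J_\lambda$ on $\overline B_{\bar t}$. For any sequence $u_n\rightharpoonup u$ in this ball, the Hilbert identity $\|u_n\|_{X_{0,s}}^2=\|u\|_{X_{0,s}}^2+\|u_n-u\|_{X_{0,s}}^2+o(1)$, the Br\'ezis--Lieb lemma applied to the critical integral, and the compact embeddings $X_{0,s}\hookrightarrow\hookrightarrow L^p(\Omega)$ for $p<2^*$ (which also handle the singular term $(u^+)^r$ via a.e.\ convergence and dominated convergence) yield
$$
\liminf_n J_\lambda(u_n)\geq J_\lambda(u)+\liminf_n\!\left(\tfrac{1}{2}\|u_n-u\|_{X_{0,s}}^2-\tfrac{1}{2^*}\int_\Omega((u_n-u)^+)^{2^*}dx\right).
$$
The residual is nonnegative as long as $\|u_n-u\|_{X_{0,s}}$ stays below the Sobolev--scaling threshold $\bigl(2^*/(2c_{2^*_s}^{2^*})\bigr)^{1/(2^*-2)}$, a condition automatically satisfied on $\overline B_{\bar t}$ after shrinking $\Lambda$ if necessary. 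Weak compactness of $\overline B_{\bar t}$ then produces a minimizer $u_\lambda$.

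Nontriviality, interiority, and positivity close the proof. For any $v\in X_{0,s}$ with $v\geq 0$, $v\not\equiv 0$, one has $J_\lambda(tv)\sim -(\lambda/r)\,t^r\int_\Omega v^r\,dx<0$ as $t\to 0^+$ (because $r<1<2$), hence $\inf_{\overline B_{\bar t}}J_\lambda<0=J_\lambda(0)$ and $u_\lambda\not\equiv 0$; the boundary estimate $J_\lambda|_{\partial B_{\bar t}}\geq\delta$ then forces $\|u_\lambda\|_{X_{0,s}}<\bar t$, making $u_\lambda$ a free critical point and therefore a weak solution. Testing the equation against $u_\lambda^-$ gives $u_\lambda\geq 0$ a.e., and strict positivity $u_\lambda>0$ in $\Omega$ follows from a strong maximum principle for $(-\Delta)^s$ applied to the nonnegative right-hand side $u_\lambda^{2^*-1}+\lambda(u_\lambda^{r-1}+g(u_\lambda))$. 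The principal obstacle I anticipate is calibrating $\bar t$ so that three incompatible-looking mechanisms cooperate on the same ball: Sobolev--constant control of the critical term, compactness for the subcritical and singular parts, and the correct sign of the variation along $tv$; the function $k$ is engineered precisely to encode this balance, which explains why $\Lambda\subseteq(0,\max k)$ is the natural range of parameters.
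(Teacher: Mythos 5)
Your overall strategy---specialize to $\mathcal L_K=(-\Delta)^s$, $\mu=1$, and minimize the energy on a small closed ball of $X_{0,s}$, using the Br\'ezis--Lieb decomposition to get local sequential weak lower semicontinuity of the critical part---is exactly the paper's (Theorem \ref{princsingolare} via Lemma \ref{lemmasemicontinuita} and the scheme of Theorem \ref{principale}). Where you genuinely diverge is in how the minimizer is pushed off the boundary sphere. You argue in the classical way: $J_\lambda\geq\delta>0$ on $\partial B(0,\bar t)$ while $\inf_{B_c(0,\bar t)}J_\lambda<0$ because of the singular term, so the minimizer is interior. The paper instead never proves positivity on the sphere; it assumes the minimizer sits on $\partial B(0,\varrho)$ and derives a contradiction through Lemma \ref{proprietaEtilde}, whose input is a bound on the \emph{incremental quotient} $\varepsilon^{-1}\bigl(\sup_{B_c(0,\varrho)}\widetilde{\mathcal E}_{\lambda,\mu}-\sup_{B_c(0,\varrho-\varepsilon)}\widetilde{\mathcal E}_{\lambda,\mu}\bigr)<\varrho$. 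That derivative-type criterion is what the function $k$ in the statement is calibrated to: differentiating the H\"older/Sobolev bounds kills the factors $1/2$, $1/2^*$, $1/r$, $1/q$, and one lands exactly on $\lambda<k(\varrho)$. Your route is more elementary and self-contained (no Lemma \ref{proprietaEtilde} needed); the paper's route buys the sharper, explicitly computable interval $(0,k(\varrho_0))$ tied to the stated $k$.

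Two caveats you should fix. First, your claim that ``$k(t)$ is the ratio whose strict positivity forces $J_\lambda>0$ on the sphere $\|u\|=t$'' is not correct as written: the sphere-positivity condition is $\tfrac12 t^2-\tfrac{1}{2^*}c_{2^*_s}^{2^*}t^{2^*}>\lambda\bigl(\tfrac1r c_{2^*_s}^r|\Omega|^{\frac{2^*-r}{2^*}}t^r+a_1c_{2^*_s}|\Omega|^{\frac{2^*-1}{2^*}}t+\tfrac{a_2}{q}c_{2^*_s}^q|\Omega|^{\frac{2^*-q}{2^*}}t^q\bigr)$, whose associated ratio $\tilde k$ differs from $k$ by those coefficients (and since $1/r$ may be large, $\lambda<k(t)$ does not imply $\lambda<\tilde k(t)$). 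Your argument still delivers the theorem because it only asks for \emph{some} open interval inside $(0,\max k)$: take $\Lambda\subseteq\bigl(0,\min\{\max k,\max\tilde k\}\bigr)$, both maxima being positive. But state this explicitly rather than attributing the sphere estimate to $k$ itself. Second, ``free critical point, therefore weak solution'' is too quick for the singular problem: the term $\tfrac1r\int_\Omega(u^+)^r\,dx$ is not G\^ateaux differentiable, so passing from local minimizer to the weak formulation (including $u^{r-1}v\in L^1(\Omega)$ and $u>0$ a.e.) requires the perturbation argument of Faraci--Farkas that the paper invokes for \eqref{problemasingolare}; this step should be cited or carried out, not asserted.
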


We point out that this kind of approach has been already fruitfully used in literature for dealing with nonlinear critical problems; we mention the papers \cite{farfar2015a,squ2004two} for equations driven by $p$-Laplacian type operators and \cite{molrep2016yamabe} for equations set on Carnot groups. Furthermore we refer to the reading of \cite{autfispuc2015stationary,fismolser2016bifurcation,fispuc2016on,haj2016a,haj2016symmetry,haj2013on} as well as \cite{kuuminsir2015nonlocal,molser2015lower,pucsal2016critical,ser2013the,serval2015fractional} for existence and regularity results to nonlocal critical problems handled with different tools.

The paper has the following structure: in Section \ref{funcframework} we illustrate the functional setting of the problem together with its weak formulation; Sections \ref{directminimization} and \ref{mountainpass} are devoted to the existence results obtained via direct minimization and mountain-pass procedure, respectively.

%%%%%%%%%%%%%%%%%%%%%%%%%%%%%%%%%%%%%%%
\section{Functional framework}\label{funcframework}
For a detailed description of the functional spaces which arise in the variational formulation of \eqref{problema} and, in general, for a good survey on fractional Sobolev spaces, we refer to the papers \cite{dinpalval2012hitchhiker,molradser2016variational} and the rich bibliography therein.
%while for more details on $X$ and $X_0$ we refer to \cite{ser2013the,serval2012mountain,serval2013lewy, serval2013variational,serval2015the}.

%In order to describe the functional set-up of problem \eqref{problema},
We start with introducing $X$ as the linear space of all Lebesgue measurable functions from $\R^n$ to $\R$ such that the restriction to $\Omega$ of any function $u$ in $X$ belongs to $L^2(\Omega)$ and the map
$$
(x,y)\mapsto (u(x)-u(y))\sqrt{K(x-y)} \in L^2(Q,dxdy),
$$
where $Q:=\R^{2n} \setminus (\Omega^c\times\Omega^c)$ and $\Omega^c:=\R^n\setminus\Omega$.

Denote by $X_0$ the following linear subspace of $X$,
$$
X_0:=\{u\in X: u=0 \mbox{ a.e. in } \Omega^c\}.
$$
Both $X$ and $X_0$ are non-empty since, for instance, $C^2_0 (\Omega)\subseteq X_0$.
%(see \cite[Lemma 11]{serval2013lewy}).
It is not difficult to realize that
\begin{equation}\label{normaX}
\|u\|_X:=\|u\|_{L^2(\Omega)} + \left(\int_Q |u(x)-u(y)|^2 K(x-y) dxdy\right)^{1/2}
\end{equation}
defines a norm on $X$ and that
\begin{equation}\label{normaX0}
X_0\ni u\mapsto \|u\|_{X_0}:=\left(\int_Q|u(x)-u(y)|^2K(x-y)dxdy\right)^{1/2}
\end{equation}
represents a norm on $X_0$ equivalent to (\ref{normaX}).
%(see \cite{serval2012mountain}).
The space $\left(X_0, \|\cdot\|_{X_0}\right)$ turns out to be a Hilbert space with scalar product
\begin{equation}\label{prodscal}
\langle u,v\rangle_{X_0}:=\int_Q \left(u(x)-u(y)\right) \left(v(x)-v(y)\right) K(x-y)dxdy.
\end{equation}
%see \cite[Lemma 7]{serval2012mountain}.
In the sequel we will employ for brevity the plain symbols $\left\|\cdot\right\|$ and $\left\langle \cdot,\cdot\right\rangle$ to refer to $\left\|\cdot\right\|_{X_0}$ and $\left\langle \cdot,\cdot\right\rangle_{X_0}$, respectively. Moreover the open (respectively closed) ball centered at $u\in X_0$ of radius $r>0$ will be denoted by $B(u,r)$ (respectively, $B_c(u,r)$), and the sphere $\{u\in X_0: \left\| u\right\|=r\}$ by $\partial B(u,r)$.

In the model case $K(x):=|x|^{-(n+2s)}$, the space $X_0$ can be characterized as follows
$$
X_0=X_{0,s}=\big\{u\in H^s(\R^n) : u=0 \mbox{ a.e. in }  \Omega^c\big\},
$$
%(see \cite[Lemma~7(b)]{serval2015the}),
where $H^s(\R^n)$ denotes the usual fractional Sobolev space endowed with the so-called Gagliardo norm (not equivalent to (\ref{normaX})):
$$
\|u\|_{H^s(\R^n)}=\|u\|_{L^2(\R^n)}+
\left(\int_{\R^{2n}}\frac{|u(x)-u(y)|^2}{|x-y|^{n+2s}} dxdy\right)^{1/2}.
$$

Under assumptions $(k_1)$ and $(k_2)$ the space $X_0$ embeds in the canonical Lebesgue spaces; more specifically, the embedding $X_0\hookrightarrow L^p(\R^n)$ is continuous for any $p\in [1,2^*]$, while it is compact whenever $p\in [1,2^*)$ (cf. also \cite{haj2016sharp}).
Throughout this paper we will use the symbol $c_p$ to denote the best constant of the aforementioned embedding $X_0\hookrightarrow L^p(\R^n)$, namely
$$
c_p:=\sup_{x\in X_0\setminus\{0\}}\frac{\left\|u\right\|_{L^p(\R^n)}}{\left\|u\right\|} \quad \mbox{ for all } p\in[1,2^*],
$$
and we reserve the symbol $S$ to the constant
\begin{equation}\label{S}
S:=\inf_{u\in X_0\setminus\{0\}}\frac{\left\| u\right\|^2}{\left\|u\right\|_{L^{2^*}(\Omega)}^2} = \inf_{u\in X_0\setminus\{0\}}\frac{\left\| u\right\|^2}{\left\|u\right\|_{L^{2^*}(\R^n)}^2}.
\end{equation}

The symbols $\kappa_j$, $j=1,2,\ldots$, indicate generic positive constants, whose value may change from appearence to appearance.

Wih the above premises, it is straightforward to prove that \eqref{problema} is the Euler-Lagrange equation of the $C^1$ functional $\mathcal{E}_{\lambda,\mu}:X_0\to \R$ defined by
\begin{equation}\label{funzenergia}
\mathcal{E}_{\lambda,\mu}(u):=\frac{1}{2}\left\|u\right\|^2 -\frac{\mu}{2^*}\int_\Omega |u|^{2^*}dx  -\lambda\int_\Omega G(u)dx
\end{equation}
for every $u\in X_0$, where
$$
G(t):=\int_0^t g(\tau)d\tau \quad \mbox{ for all } t\in\R.
$$
As a result, the search for weak solutions to problem \eqref{problema}, namely functions $u\in X_0$ such that
$$
\left\langle u,v\right\rangle - \mu\int_\Omega |u|^{2^*-2}uv dx - \lambda \int_\Omega g(u)vdx =0
$$
for every $v\in X_0$, reduces to the search for critical points of the energy functional ${\mathcal E}_{\lambda,\mu}$.

%%%%%%%%%%%%%%%%%%%%%%%%%%%%%%%%%
\section{A direct minimization approach}\label{directminimization}

Our main result reads as follows:
\begin{theorem}\label{principale}
	Let $g:\R\to\R$ be a continuous function for which
	\begin{itemize}
		\item[$(g_1)$] there exist $a_1,a_2>0$ and $q\in[1,2^*)$ such that
		$$
		|g(t)|\leq a_1 + a_2|t|^{q-1} \quad  \mbox{ for all } t\in\R,
		$$
		\item[$(g_2)$] $\displaystyle\liminf_{t\to 0^+}\frac{G(t)}{t^2} = +\infty$.
	\end{itemize}
	Further, for any $\mu>0$ let $h_\mu:[0,+\infty)\to\R$ be the function defined by
	\begin{equation}\label{funzionehmu}
	h_\mu(t):=\frac{t-\mu c_{2^*}^{2^*} t^{2^*-1}}{a_1 c_{2^*}|\Omega|^\frac{2^*-1}{2^*} + a_2 c_{2^*}^q |\Omega|^\frac{2^*-q}{2^*} t^{q-1}} \quad  \mbox{ for all } t\geq 0.
	\end{equation}
	Then, for all $\mu>0$ there exists an open interval
	$$
	\Lambda _\mu \subseteq \left(0,\max_{[0,+\infty)}h_\mu\right)
	$$
	such that, for every $\lambda\in\Lambda_\mu$, \eqref{problema} admits a non-trivial weak solution $u_{0,\mu,\lambda}\in X_0$.
\end{theorem}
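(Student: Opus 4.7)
The strategy is direct minimization of $\mathcal{E}_{\lambda,\mu}$ on a closed ball $B_c(0,\rho)\subset X_0$ whose radius $\rho$ is tied to $\lambda$ through $h_\mu$. Observe first that $h_\mu(0)=0$ and $h_\mu(t)\to-\infty$ as $t\to+\infty$ (the numerator becomes dominated by $-\mu c_{2^*}^{2^*}t^{2^*-1}$), so $\max h_\mu>0$ is attained and, for every $\lambda\in(0,\max h_\mu)$, the super-level set $\{t>0:h_\mu(t)>\lambda\}$ is a non-empty open interval from which I would pick $\rho=\rho(\lambda)$. The goal is to show that $\mathcal{E}_{\lambda,\mu}$ attains its infimum on $B_c(0,\rho)$ at an interior point $u^{*}\ne 0$: as a $C^{1}$ interior local minimum, $u^{*}$ is automatically a critical point of $\mathcal{E}_{\lambda,\mu}$ and hence a non-trivial weak solution to \eqref{problema}, while $\Lambda_\mu$ would arise as an open subinterval of $(0,\max h_\mu)$ along which $\rho(\lambda)$ can be chosen continuously.

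For the \emph{local} weak lower semicontinuity of $\mathcal{E}_{\lambda,\mu}$ on $B_c(0,\rho)$, the subcritical piece $u\mapsto\int_\Omega G(u)\,dx$ is sequentially weakly continuous by $(g_1)$ together with the compact embedding $X_0\hookrightarrow L^{q}(\R^n)$ for $q\in[1,2^{*})$, and only the critical term can obstruct w.l.s.c. Applying the Brezis--Lieb lemma to $u_n\rightharpoonup u^{*}$ in $B_c(0,\rho)$, with $v_n:=u_n-u^{*}$, one obtains
\begin{equation*}
\mathcal{E}_{\lambda,\mu}(u_n)-\mathcal{E}_{\lambda,\mu}(u^{*})=\tfrac{1}{2}\|v_n\|^{2}-\tfrac{\mu}{2^{*}}\|v_n\|_{L^{2^{*}}}^{2^{*}}+o(1)\ge\|v_n\|^{2}\!\left(\tfrac{1}{2}-\tfrac{\mu c_{2^{*}}^{2^{*}}}{2^{*}}\|v_n\|^{2^{*}-2}\right)+o(1).
\end{equation*}
Since the Hilbert identity gives $\|v_n\|\le\rho+o(1)$, and the condition $h_\mu(\rho)>0$ forces $\mu c_{2^{*}}^{2^{*}}\rho^{2^{*}-2}<1<\tfrac{2^{*}}{2}$, the bracket is non-negative in the limit, so $\liminf\mathcal{E}_{\lambda,\mu}(u_n)\ge\mathcal{E}_{\lambda,\mu}(u^{*})$. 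Combined with weak compactness of $B_c(0,\rho)$, this produces a minimizer $u^{*}\in B_c(0,\rho)$. Non-triviality follows from $(g_{2})$: for any non-trivial $\varphi\in X_0$ with $\varphi\ge 0$ and any $M>0$ there is $\delta>0$ with $G(t)\ge Mt^{2}$ on $[0,\delta]$, so for small $\varepsilon>0$,
\begin{equation*}
\mathcal{E}_{\lambda,\mu}(\varepsilon\varphi)\le\tfrac{\varepsilon^{2}}{2}\|\varphi\|^{2}-\lambda M\varepsilon^{2}\|\varphi\|_{L^{2}(\Omega)}^{2}+O(\varepsilon^{2^{*}}),
\end{equation*}
which is $<0$ for $M$ large enough. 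Hence $\mathcal{E}_{\lambda,\mu}(u^{*})<0=\mathcal{E}_{\lambda,\mu}(0)$, and in particular $u^{*}\ne 0$.

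The decisive step is showing $\|u^{*}\|<\rho$. If on the contrary $\|u^{*}\|=\rho$, the real map $t\mapsto\mathcal{E}_{\lambda,\mu}(tu^{*}/\rho)$ attains its minimum on $[0,\rho]$ at the right endpoint, so its left derivative there is non-positive, yielding $\langle\mathcal{E}_{\lambda,\mu}'(u^{*}),u^{*}\rangle\le 0$. On the other hand, $(g_{1})$ combined with H\"older plus Sobolev gives
\begin{equation*}
\langle\mathcal{E}_{\lambda,\mu}'(u^{*}),u^{*}\rangle\ge\rho\!\left[\bigl(\rho-\mu c_{2^{*}}^{2^{*}}\rho^{2^{*}-1}\bigr)-\lambda\bigl(a_{1}c_{2^{*}}|\Omega|^{\frac{2^{*}-1}{2^{*}}}+a_{2}c_{2^{*}}^{q}|\Omega|^{\frac{2^{*}-q}{2^{*}}}\rho^{q-1}\bigr)\right],
\end{equation*}
and the bracket is strictly positive exactly because $\lambda<h_\mu(\rho)$, a contradiction. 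Thus $u^{*}$ lies in the interior of $B_c(0,\rho)$ and $\mathcal{E}_{\lambda,\mu}'(u^{*})=0$, completing the argument.

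The main obstacle is, of course, the critical growth, which generally destroys both the global weak lower semicontinuity of $\mathcal{E}_{\lambda,\mu}$ and the compactness of minimizing/Palais--Smale sequences; the elegant point is that the single quantitative inequality $\lambda<h_\mu(\rho)$ plays a double role, simultaneously keeping the ball small enough for Brezis--Lieb to restore local w.l.s.c.\ and forcing the radial derivative of $\mathcal{E}_{\lambda,\mu}$ to be strictly positive on $\partial B(0,\rho)$, thereby trapping the minimizer inside.
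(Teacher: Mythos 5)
Your proof is correct, and for the decisive step --- ruling out that the minimizer $u^{*}$ sits on the sphere $\partial B(0,\rho)$ --- you take a genuinely different route from the paper. The paper writes $\mathcal E_{\lambda,\mu}=\frac12\|\cdot\|^{2}-\widetilde{\mathcal E}_{\lambda,\mu}$ and runs a comparison-of-suprema argument (Lemma \ref{proprietaEtilde}): using $(g_1)$ it bounds the incremental quotient $\varepsilon^{-1}\bigl(\sup_{B_c(0,\rho)}\widetilde{\mathcal E}_{\lambda,\mu}-\sup_{B_c(0,\rho-\varepsilon)}\widetilde{\mathcal E}_{\lambda,\mu}\bigr)$ by exactly the quantity $\mu c_{2^*}^{2^*}\rho^{2^*-1}+\lambda\bigl(a_1c_{2^*}|\Omega|^{(2^*-1)/2^*}+a_2c_{2^*}^{q}|\Omega|^{(2^*-q)/2^*}\rho^{q-1}\bigr)$, so that $\lambda<h_\mu(\rho)$ produces, via weak upper semicontinuity of $\widetilde{\mathcal E}_{\lambda,\mu}$ on spheres, an interior point whose energy beats the boundary minimum. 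You instead differentiate along the ray $t\mapsto\mathcal E_{\lambda,\mu}(tu^{*}/\rho)$, obtain the Euler inequality $\langle\mathcal E'_{\lambda,\mu}(u^{*}),u^{*}\rangle\le 0$ at the endpoint, and contradict it with the H\"older--Sobolev lower bound, whose strict positivity is literally equivalent to $\lambda<h_\mu(\rho)$. Both mechanisms consume the same quantitative hypothesis; yours is shorter and more elementary, and your observation that $h_\mu(\rho)>0$ already forces $\mu c_{2^*}^{2^*}\rho^{2^*-2}<1<2^*/2$ makes the separate radius $\bar\varrho_\mu$ of Lemma \ref{lemmasemicontinuita} superfluous. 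What the paper's version buys is that it never differentiates the perturbation: this is precisely what allows Lemmas \ref{lemmasemicontinuita} and \ref{proprietaEtilde} to be recycled verbatim for the singular problem of Theorem \ref{princsingolare}, where the term $\frac{\lambda}{r}\int_\Omega(u^{+})^{r}dx$ with $r\in(0,1)$ destroys Gateaux differentiability where $u$ vanishes and your radial-derivative argument would need extra care.

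Two cosmetic points: in the non-triviality step the test function $\varphi$ should be taken bounded (e.g.\ in $C_0^2(\Omega)$, $\varphi\ge 0$, $\varphi\not\equiv 0$) so that $\varepsilon\varphi(x)\in(0,\delta)$ pointwise, as the paper does; and the super-level set $\{t>0:h_\mu(t)>\lambda\}$ need not be an interval, but your argument only requires it to be non-empty, so this is immaterial --- in fact your reasoning shows the whole interval $(0,\max_{[0,+\infty)}h_\mu)$ serves as $\Lambda_\mu$.
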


As already mentioned, we will prove the existence of such a weak solution by showing that the energy functional \eqref{funzenergia} attains a non-zero local minimum in $X_0$. Yet, due to the presence of the critical term, the direct minimization is not immediately applicable but requires some preliminary results on the following functionals.

\begin{lemma}\label{lemmasemicontinuita}
For every $\mu>0$ there exists $\varrho_{0,\mu}>0$ such that the functional
$$
\widehat{\mathcal E}_\mu(u):=\frac{1}{2}\left\|u\right\|^2 -\frac{\mu}{2^*}\int_\Omega |u|^{2^*}dx, \quad u\in X_0,
$$
is sequentially weakly lower semicontinuous in $B_c(0,\varrho_{0,\mu})$.
\end{lemma}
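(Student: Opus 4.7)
The first summand $\frac{1}{2}\|\cdot\|^2$ of $\widehat{\mathcal E}_\mu$ is convex and continuous on the Hilbert space $X_0$, hence sequentially weakly lower semicontinuous throughout $X_0$ with no restriction on the radius. The sole obstacle is the critical term $u\mapsto\frac{\mu}{2^*}\int_\Omega|u|^{2^*}\,dx$: since the embedding $X_0\hookrightarrow L^{2^*}(\R^n)$ is continuous but \emph{not} compact, weak convergence in $X_0$ does not pass to $L^{2^*}$, so this term fails to be weakly continuous on the whole space. The plan is to localize on a ball small enough that the convex quadratic contribution dominates the critical contribution coming from the ``defect'' sequence $u_n-u$.

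Concretely, take $u_n\rightharpoonup u$ in $B_c(0,\varrho_{0,\mu})$; weak lower semicontinuity of the norm gives $u\in B_c(0,\varrho_{0,\mu})$ as well. The compact embeddings $X_0\hookrightarrow L^p(\Omega)$ for $p\in[1,2^*)$ let me extract a subsequence converging to $u$ a.e.\ in $\Omega$, and then the Br\'ezis–Lieb lemma delivers
$$
\int_\Omega|u_n|^{2^*}dx=\int_\Omega|u_n-u|^{2^*}dx+\int_\Omega|u|^{2^*}dx+o(1).
$$
The Hilbert-space identity (using $\langle u_n,u\rangle\to\|u\|^2$) gives the companion decomposition $\|u_n\|^2=\|u_n-u\|^2+\|u\|^2+o(1)$, and combining the two produces
$$
\widehat{\mathcal E}_\mu(u_n)-\widehat{\mathcal E}_\mu(u)=\frac{1}{2}\|u_n-u\|^2-\frac{\mu}{2^*}\int_\Omega|u_n-u|^{2^*}dx+o(1).
$$

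At this point I will apply the Sobolev inequality $\|v\|_{L^{2^*}(\R^n)}\leq c_{2^*}\|v\|$ to $v=u_n-u$ and factor out $\|u_n-u\|^2$:
$$
\widehat{\mathcal E}_\mu(u_n)-\widehat{\mathcal E}_\mu(u)\geq \|u_n-u\|^2\left(\frac{1}{2}-\frac{\mu c_{2^*}^{2^*}}{2^*}\|u_n-u\|^{2^*-2}\right)+o(1).
$$
The Hilbert decomposition also forces $\|u_n-u\|^2\leq\|u_n\|^2+o(1)\leq\varrho_{0,\mu}^2+o(1)$, so choosing
$$
\varrho_{0,\mu}<\left(\frac{2^*}{2\mu c_{2^*}^{2^*}}\right)^{1/(2^*-2)}
$$
makes the bracketed factor bounded below by a fixed positive constant for all $n$ large enough. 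Passing to $\liminf$ then delivers $\widehat{\mathcal E}_\mu(u)\leq\liminf_n\widehat{\mathcal E}_\mu(u_n)$.

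The only substantive step is the $L^{2^*}$ Br\'ezis–Lieb splitting, which is where one truly uses the subcritical compact embeddings (via almost-everywhere convergence along a subsequence); once that decomposition is in hand, the argument reduces to a quantitative balance between the quadratic scale $\|\cdot\|^2$ and the critical scale $\|\cdot\|^{2^*}$, with the ball radius $\varrho_{0,\mu}$ tuned precisely to absorb the bad critical contribution on differences. Since the estimate is established along an arbitrary subsequence of $u_n$, a standard Urysohn-type argument gives the conclusion for the full sequence.
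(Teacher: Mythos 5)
Your proof is correct and follows essentially the same route as the paper: a Br\'ezis--Lieb splitting of the critical term, the exact Hilbert-space decomposition $\|u_n\|^2=\|u_n-u\|^2+\|u\|^2+o(1)$ (which is precisely the algebraic identity the paper invokes), the Sobolev inequality applied to the difference $u_n-u$, and the same smallness threshold for the radius, since $\left(\tfrac{2^*}{2\mu c_{2^*}^{2^*}}\right)^{1/(2^*-2)}$ coincides with the paper's $\bar{\varrho}_\mu=\left(\tfrac{n}{(n-2s)c_{2^*}^{2^*}\mu}\right)^{(n-2s)/(4s)}$. Your explicit remark that $\|u_n-u\|^2\leq\varrho_{0,\mu}^2+o(1)$ follows from the norm decomposition is a small point of added care over the paper's write-up, but the argument is the same.
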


\begin{proof}
Let $\mu,\varrho>0$ and let $\{u_j\}_{j\in \N}\subset B_c(0,\varrho)$ be a sequence weakly convergent to some $u_\infty\in B_c(0,\varrho)$.  We will prove that
\begin{equation}\label{convergenzeFine}
%\begin{split}
%&\liminf_{j\rightarrow\infty}(\widehat{\mathcal E}_\mu(u_j)-\widehat{\mathcal E}_\mu(u_\infty))\\
%&=\liminf_{j\rightarrow\infty}\left( \frac{1}{2}\left(\left\|u_j\right\|^2-\left\|u_\infty\right\|^2\right)  -\frac{\mu}{2^*}\left(\int_\Omega |u_j|^{2^*}dx - \int_\Omega |u_\infty|^{2^*}dx\right)\right)\geq 0.
%\end{split}
\liminf_{j\rightarrow\infty}(\widehat{\mathcal E}_\mu(u_j)-\widehat{\mathcal E}_\mu(u_\infty))\geq 0.
\end{equation}

The following basic algebraic inequality % (see \cite[Lemma 4.2]{lin1990on})
$$
|b|^2-|a|^2\geq 2a(b-a) + |a-b|^2, \quad a,b\in\R,
$$
used with
$$
a:=u_\infty(x)-u_\infty(y), \quad b:=u_j(x)-u_j(y), \quad x,y\in\R^n,
$$
produces
\begin{equation}\label{GMB1}
\begin{split}
 & \frac{1}{2}\int_Q \left(|u_j(x)-u_j(y)|^2 - |u_\infty(x)-u_\infty(y)|^2\right) K(x-y)dxdy\\
 & \geq \int_Q (u_\infty(x)-u_\infty(y))(u_j(x)-u_j(y) - u_\infty(x) + u_\infty(y))K(x-y)dxdy\\
 & +\frac{1}{2}\int_Q |u_j(x)-u_j(y)-u_\infty(x)+u_\infty(y)|^2 K(x-y)dxdy.
\end{split}
\end{equation}

On the other hand, due to Br\'{e}zis-Lieb's Lemma, we obtain
\begin{equation}\label{GMB2}
\liminf_{j\rightarrow \infty}\left( \int_\Omega |u_j(x)|^{2^*}dx - \int_\Omega |u_\infty(x)|^{2^*}dx\right) = \liminf_{j\rightarrow \infty}\int_\Omega |u_j(x)-u_\infty(x)|^{2^*}dx
\end{equation}
while the convergence $u_j\rightharpoonup u_\infty$ implies that
%Further, testing \eqref{convergenze000} with $v=u_\infty$, it follows that
\begin{equation}\label{convergenze0000}
\lim_{j\to\infty}\int_Q (u_\infty(x)-u_\infty(y))(u_j(x)-u_j(y) - u_\infty(x) + u_\infty(y))K(x-y)dxdy = 0.
\end{equation}

%Hence, by using \eqref{GMB1}, \eqref{GMB2} and \eqref{convergenze0000} one has
Hence
\begin{equation}\label{perGMB22}
\begin{split}
\liminf_{j\rightarrow \infty}(\widehat{\mathcal E}_\mu(u_j)-\widehat{\mathcal E}_\mu(u_\infty)) &\geq \liminf_{j\rightarrow \infty}\left( \frac{1}{2}\left\|u_j-u_\infty\right\|^2 -\frac{\mu}{2^*}\int_\Omega |u_j(x)-u_\infty(x)|^{2^*}dx\right)\\
& \geq \liminf_{j\rightarrow \infty} \left(\frac{1}{2}-\frac{c_{2^*}^{2^*}}{2^*}\mu\|u_j-u_\infty\|^{2^*-2}\right)\|u_j-u_\infty\|^{2}\\
& \geq \liminf_{j\rightarrow \infty}\left(\frac{1}{2} - \frac{c_{2^*}^{2^*}}{2^*}\mu\varrho^{2^*-2}\right) \|u_j-u_\infty\|^{2}.
\end{split}
\end{equation}
So for $\varrho$ sufficiently small, that is
\begin{equation}\label{rhosegnatomu}
0<\varrho\leq \bar{\varrho}_\mu:=\left(\frac{n}{(n-2s) c_{2^*}^{2^*}\mu}\right)^{\frac{n-2s}{4s}},
\end{equation}
inequality \eqref{convergenzeFine} is verified and the functional $\widehat{\mathcal E}_\mu$ is sequentially weakly lower semicontinuous in $B_c(0,\varrho_{0,\mu})$, provided that $\varrho_{0,\mu}\in (0,\bar{\varrho}_\mu)$.
\end{proof}

\begin{lemma}\label{proprietaEtilde}
Let $\lambda,\mu>0$, let $g:\R\to\R$ satisfy $(g_1)$, and let $\widetilde{\mathcal E}_{\lambda,\mu}:X_0\to\R$ be the functional defined by
$$
\widetilde{\mathcal E}_{\lambda,\mu}(u):=\frac{\mu}{2^*}\int_\Omega |u|^{2^*}dx + \lambda \int_\Omega G(u)dx
$$
for any $u\in X_0$. Then the following facts holds:
\begin{itemize}
\item[$(i)$] If
\begin{equation}\label{Ga1}
\limsup_{\varepsilon\rightarrow 0^+}\frac{\displaystyle\sup_{B_c(0,\varrho_0)}\widetilde{\mathcal E}_{\lambda,\mu} - \sup_{B_c(0,\varrho_0-\epsilon)}\widetilde{\mathcal E}_{\lambda,\mu}}{\varepsilon}<\varrho_0
\end{equation}
for some $\varrho_0>0$, then
\begin{equation}\label{Ga2}
\inf_{\sigma<\varrho_0}\frac{\displaystyle \sup_{B_c(0,\varrho_0)}\widetilde{\mathcal E}_{\lambda,\mu} - \sup_{B_c(0,\sigma)}\widetilde{\mathcal E}_{\lambda,\mu}}{\varrho_0^2-\sigma^2}<\frac{1}{2};
\end{equation}
\item[$(ii)$] if \eqref{Ga2} is satisfied for some $\varrho_0>0$, then
\begin{equation}\label{VGa1}
\inf_{u\in B(0,\varrho_0)}\frac{\displaystyle \sup_{B_c(0,\varrho_0)}\widetilde{\mathcal E}_{\lambda,\mu} - \widetilde{\mathcal E}_{\lambda,\mu}(u)}{\varrho_0^2-\left\|u\right\|^2}<\frac{1}{2}.
\end{equation}
\end{itemize}
\end{lemma}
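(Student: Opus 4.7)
My plan is to treat the two implications as essentially elementary manipulations, centering everything around the auxiliary nondecreasing function
\[
\phi(r):=\sup_{B_c(0,r)}\widetilde{\mathcal E}_{\lambda,\mu},\qquad r\ge 0,
\]
which is finite on every bounded set thanks to $(g_1)$ and the continuous embeddings $X_0\hookrightarrow L^q(\R^n)$ for $q\in[1,2^*]$. The single algebraic identity that will drive both parts is the factorization $\varrho_0^2-\sigma^2=(\varrho_0-\sigma)(\varrho_0+\sigma)$, which turns the ``quadratic'' difference quotients in \eqref{Ga2} and \eqref{VGa1} into ``linear'' ones modulated by $1/(\varrho_0+\sigma)$.

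For (i), since $\limsup_{\varepsilon\to0^+}[\phi(\varrho_0)-\phi(\varrho_0-\varepsilon)]/\varepsilon<\varrho_0$, I would fix $M$ with $\limsup<M<\varrho_0$ and produce $\delta>0$ such that
\[
\frac{\phi(\varrho_0)-\phi(\varrho_0-\varepsilon)}{\varepsilon}<M\qquad\textrm{for all }\varepsilon\in(0,\delta).
\]
Setting $\sigma:=\varrho_0-\varepsilon$, the factorization above gives
\[
\frac{\phi(\varrho_0)-\phi(\sigma)}{\varrho_0^2-\sigma^2}=\frac{1}{2\varrho_0-\varepsilon}\cdot\frac{\phi(\varrho_0)-\phi(\varrho_0-\varepsilon)}{\varepsilon}<\frac{M}{2\varrho_0-\varepsilon}.
\]
As $\varepsilon\to0^+$ the right-hand side tends to $M/(2\varrho_0)<1/2$, so for every sufficiently small $\varepsilon$ the left-hand quotient is strictly below $1/2$, and passing to the infimum yields \eqref{Ga2} at once.

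For (ii), I would start from a $\sigma_0<\varrho_0$ witnessing \eqref{Ga2}, i.e.\ such that $[\phi(\varrho_0)-\phi(\sigma_0)]/[\varrho_0^2-\sigma_0^2]+\eta<1/2$ for some $\eta>0$. By the very definition of supremum I then pick, for any $\tau>0$, a point $u_\tau\in B_c(0,\sigma_0)$ with $\widetilde{\mathcal E}_{\lambda,\mu}(u_\tau)>\phi(\sigma_0)-\tau$. The bound $\|u_\tau\|\le\sigma_0<\varrho_0$ guarantees both $u_\tau\in B(0,\varrho_0)$ and $\varrho_0^2-\|u_\tau\|^2\ge\varrho_0^2-\sigma_0^2>0$; consequently
\[
\frac{\phi(\varrho_0)-\widetilde{\mathcal E}_{\lambda,\mu}(u_\tau)}{\varrho_0^2-\|u_\tau\|^2}\le\frac{\phi(\varrho_0)-\phi(\sigma_0)+\tau}{\varrho_0^2-\sigma_0^2}.
\]
Choosing $\tau$ so small that $\tau/(\varrho_0^2-\sigma_0^2)<\eta$ forces this ratio below $1/2$, which proves \eqref{VGa1} after passing to the infimum over $B(0,\varrho_0)$.

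The genuine conceptual point I expect to be the delicate one is precisely this last step: replacing a supremum over $B_c(0,\sigma_0)$ by the value at a near-maximiser costs only $\tau$ in the numerator, and this loss is absorbed by the strictly positive slack $\varrho_0^2-\sigma_0^2$ inherited from $\sigma_0<\varrho_0$. This is exactly what sidesteps any need for weak upper semicontinuity of $\widetilde{\mathcal E}_{\lambda,\mu}$ or compactness of the critical embedding $X_0\hookrightarrow L^{2^*}(\R^n)$, whose failure is otherwise the main source of difficulty in this setting.
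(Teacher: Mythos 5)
Your proof is correct. Part (i) is, up to notation, the paper's own computation: the factorization $\varrho_0^2-(\varrho_0-\varepsilon)^2=\varepsilon(2\varrho_0-\varepsilon)$ converts \eqref{Ga1} into \eqref{Ga2} after letting $\varepsilon\to0^+$. Part (ii) is where you genuinely diverge. The paper first argues (via $(g_1)$ and weak lower semicontinuity of $\widetilde{\mathcal E}_{\lambda,\mu}$) that the supremum of $\widetilde{\mathcal E}_{\lambda,\mu}$ over the sphere $\partial B_c(0,\sigma_0)$ equals its supremum over the closed ball $B_c(0,\sigma_0)$ --- the weak closure of the sphere being the ball --- and then selects a near-maximiser $u_0$ with $\|u_0\|=\sigma_0$ \emph{exactly}, so that the denominator in \eqref{VGa1} is precisely $\varrho_0^2-\sigma_0^2$. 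You instead take a near-maximiser $u_\tau$ anywhere in $B_c(0,\sigma_0)$ and use monotonicity of the quotient in the denominator; this works, but note that the step
$$
\frac{\phi(\varrho_0)-\widetilde{\mathcal E}_{\lambda,\mu}(u_\tau)}{\varrho_0^2-\|u_\tau\|^2}\le\frac{\phi(\varrho_0)-\phi(\sigma_0)+\tau}{\varrho_0^2-\sigma_0^2}
$$
silently uses that the left-hand numerator is nonnegative (true, since $u_\tau\in B_c(0,\sigma_0)\subset B_c(0,\varrho_0)$, but worth stating: the implication $a\le a'$, $b\ge b'>0$ $\Rightarrow$ $a/b\le a'/b'$ needs $a\ge0$). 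What your route buys is the complete elimination of the weak-topology apparatus --- no weak closures, no semicontinuity of the critical term --- reducing (ii) to an elementary supremum manipulation valid for any functional bounded above on balls, with $(g_1)$ entering only through finiteness of $\phi$; the paper's route pins the competitor to the sphere, which is more than the infimum over the open ball $B(0,\varrho_0)$ actually requires.
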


\begin{proof}
\emph{$(i)$} Observe first that if \eqref{Ga1} is fulfilled for some $\varrho_0>0$, then
\begin{equation}\label{Ga1GG}
\limsup_{\varepsilon\rightarrow 0^+}\frac{\displaystyle\sup_{B_c(0,\varrho_0)}\widetilde{\mathcal E}_{\lambda,\mu} - \sup_{B_c(0,\varrho_0-\epsilon)}\widetilde{\mathcal E}_{\lambda,\mu}}{\varrho_0^2 -(\varrho_0-\varepsilon)^2}<\frac{1}{2},
\end{equation}
as one can easily see considering that
\begin{equation}
\frac{\displaystyle\sup_{B_c(0,\varrho_0)}\widetilde{\mathcal E}_{\lambda,\mu} - \sup_{B_c(0,\varrho_0-\epsilon)}\widetilde{\mathcal E}_{\lambda,\mu}}{\varrho_0^2 -(\varrho_0-\varepsilon)^2} =\left(  \frac{\displaystyle\sup_{B_c(0,\varrho_0)}\widetilde{\mathcal E}_{\lambda,\mu} - \sup_{B_c(0,\varrho_0-\epsilon)}\widetilde{\mathcal E}_{\lambda,\mu}}{\varepsilon}\right) \left( \frac{1}{2\varrho_0-\varepsilon}\right) .
\end{equation}

Now, by \eqref{Ga1GG} there exists $\bar\varepsilon_0>0$ such that
$$
\frac{\displaystyle\sup_{B_c(0,\varrho_0)}\widetilde{\mathcal E}_{\lambda,\mu} - \sup_{B_c(0,\varrho_0-\varepsilon)}\widetilde{\mathcal E}_{\lambda,\mu}}{\varrho_0^2 -(\varrho_0-\varepsilon)^2}<\frac{1}{2}
$$
for every $\varepsilon\in (0,\bar\varepsilon_0)$; setting $\sigma_0:=\varrho_0-\varepsilon_0$, with $\varepsilon_0\in (0,\bar\varepsilon_0)$, it follows that
$$
\frac{\displaystyle\sup_{B_c(0,\varrho_0)}\widetilde{\mathcal E}_{\lambda,\mu} - \sup_{B_c(0,\sigma_0)}\widetilde{\mathcal E}_{\lambda,\mu}}{\varrho_0^2 -\sigma_0^2}<\frac{1}{2}
$$
and thus the conclusion follows.

\emph{$(ii)$} Thanks to inequality \eqref{Ga2} one has
\begin{equation}\label{Ga123}
\sup_{B_c(0,\sigma_0)}\widetilde{\mathcal E}_{\lambda,\mu} > \sup_{B_c(0,\varrho_0)}\widetilde{\mathcal E}_{\lambda,\mu} -\frac{1}{2}(\varrho_0^2-\sigma^2_0)
\end{equation}
for some $0<\sigma_0<\varrho_0$. Assumption $(g_1)$ and standard arguments show that $\widetilde{\mathcal E}_{\lambda,\mu}$ is weakly lower semicontinuous in $B_c(0,\sigma_0)$ and therefore one has
%in $B_c(0,\sigma_0)$ weak and sequential weak lower semicontinuity of $\widetilde{\mathcal E}_{\lambda,\mu}$ agree
\begin{equation*}\label{Ga1234}
\sup_{\partial B_c(0,\sigma_0)}\widetilde{\mathcal E}_{\lambda,\mu} = \sup_{\overline{\partial B_c(0,\sigma_0)}^w}\widetilde{\mathcal E}_{\lambda,\mu} = \sup_{B_c(0,\sigma_0)}\widetilde{\mathcal E}_{\lambda,\mu},
\end{equation*}
$\overline{\partial B_c(0,\sigma_0)}^w$ being the weak closure of $\partial B_c(0,\sigma_0)$ in $X_0$, and by \eqref{Ga123} there exists $u_0\in X_0$ with $\|u_0\|=\sigma_0$ such that
%\begin{equation}\label{Ga1235}
$$
\widetilde{\mathcal E}_{\lambda,\mu}(u_0)>\sup_{B_c(0,\sigma_0)}\widetilde{\mathcal E}_{\lambda,\mu}-\frac{1}{2}(\varrho_0^2-\sigma^2_0),
$$
that is,
$$
\frac{\displaystyle\sup_{B_c(0,\varrho_0)}\widetilde{\mathcal E}_{\lambda,\mu} - \widetilde{\mathcal E}_{\lambda,\mu}(u_0)}{\varrho_0^2- \|u_0\|^2}<\frac{1}{2},
$$
%\end{equation}
and the second claim is proved as well.
\end{proof}

\smallskip
We are now in position to prove our existence result.
\smallskip

\emph{Proof of Theorem \ref{principale}.} Fix $\mu>0$ and let $\varrho_{\mu,\max}>0$ be the point of global maximum of $h_\mu$; set $\varrho_{0,\mu}:=\min\{\bar{\varrho}_\mu, \varrho_{\mu,\max}\}$, $\bar{\varrho}_\mu$ being defined by \eqref{rhosegnatomu}, and take
$$
\lambda\in\Lambda_\mu:=(0,h_\mu(\varrho_{0,\mu})).
$$

As a result, there exists $\varrho_{0,\mu,\lambda}\in(0,\varrho_{0,\mu})$ such that
\begin{equation}\label{rangelambda}
0<\lambda<\frac{\varrho_{0,\mu,\lambda}-\mu c_{2^*}^{2^*}\varrho_{0,\mu,\lambda}^{2^*-1}}{ac_{2^*}|\Omega|^\frac{2^*-1}{2^*} + ac_{2^*}^q |\Omega|^\frac{2^*-q}{2^*}\varrho_{0,\mu,\lambda}^{q-1}}.	
\end{equation}

Since $\varrho_{0,\mu,\lambda}<\bar{\varrho}_\mu$, by Lemma \ref{lemmasemicontinuita} the functional $\mathcal E_{\lambda,\mu}$ is sequentially weakly lower semicontinuous in $B_c(0,\varrho_{0,\mu,\lambda})$ and then possesses a global minimum $u_{0,\mu,\lambda}$ in it.

Arguing by contradiction, suppose that $\|u_{0,\mu,\lambda}\|=\varrho_{0,\mu,\lambda}$. Let $0<\varepsilon<\varrho_{0,\mu,\lambda}$ and set
$$
\varphi_{\lambda,\mu}(\varepsilon,\varrho_{0,\mu,\lambda}):=\frac{\displaystyle \sup_{B_c(0,\varrho_{0,\mu,\lambda})}\widetilde{\mathcal E}_{\lambda,\mu} - \sup_{B_c(0,\varrho_{0,\mu,\lambda}-\varepsilon)}\widetilde{\mathcal E}_{\lambda,\mu}}{\varepsilon}.
$$

By $(g_1)$ one has
\begin{align*}
\varphi_{\lambda,\mu}(\varepsilon,\varrho_{0,\mu,\lambda}) &\leq\frac{1}{\varepsilon}\sup_{u\in B_c(0,1)}\int_\Omega\left|\int_{(\varrho_{0,\mu,\lambda}-\varepsilon)u(x)}^{\varrho_{0,\mu,\lambda} u(x)}\left(\mu|t|^{2^*-1} +\lambda |g(t)|\right) dt\right|dx\\
&\leq \frac{1}{\varepsilon}\sup_{u\in B_c(0,1)}\int_\Omega\left|\int_{(\varrho_{0,\mu,\lambda}-\varepsilon)u(x)}^{\varrho_{0,\mu,\lambda} u(x)}\left(\mu|t|^{2^*-1} + a_1\lambda + a_2\lambda |t|^{q-1}\right) dt\right|dx\\
& \leq \frac{c_{2^*}^{2^*}\mu}{2^*}\left(\frac{\varrho_{0,\mu,\lambda}^{2^*}-(\varrho_{0,\mu,\lambda}-\varepsilon)^{2^*}}{\varepsilon}\right) + a_1\lambda c_{2^*}|\Omega|^\frac{2^*-1}{2^*} \\
&\;\;\; + a_2 \lambda \frac{c_{2^*}^q}{q}|\Omega|^\frac{2^*-q}{2^*}\left(\frac{\varrho_{0,\mu,\lambda}^q-(\varrho_{0,\mu,\lambda}-\varepsilon)^q}{\varepsilon}\right)
\end{align*}
and passing to the limsup for $\varepsilon\rightarrow 0^+$ we get
\begin{equation}\label{VGa1Glu1}
\limsup_{\varepsilon\rightarrow 0^+}\varphi_{\lambda,\mu}(\varepsilon,\varrho_{0,\mu,\lambda}) < c_{2^*}^{2^*}\mu \varrho_{0,\mu,\lambda}^{2^*-1} + \lambda a_1 c_{2^*}|\Omega|^\frac{2^*-1}{2^*} + \lambda a_2 c_{2^*}^q|\Omega|^\frac{2^*-q}{2^*}\varrho_{0,\mu,\lambda}^{q-1},
\end{equation}
which due to \eqref{rangelambda} forces
$$
\limsup_{\varepsilon\rightarrow 0^+}\varphi_{\lambda,\mu}(\varepsilon,\varrho_{0,\mu,\lambda})<\varrho_{0,\mu,\lambda}.
$$

So, by Lemma \ref{proprietaEtilde},
\begin{equation*}\label{VGa1Glu}
\inf_{u\in B(0,\varrho_{0,\mu,\lambda})}\frac{\displaystyle \sup_{B_c(0,\varrho_{0,\mu,\lambda})}\widetilde{\mathcal E}_{\lambda,\mu} - \widetilde{\mathcal E}_{\lambda,\mu}(u)}{\varrho_{0,\mu,\lambda}^2-\left\|u\right\|^2}<\frac{1}{2}
\end{equation*}
and therefore there exists $w_{\mu,\lambda}\in B(0,\varrho_{0,\mu,\lambda})$ such that
$$
\widetilde{\mathcal E}_{\lambda,\mu}(u)\leq \sup_{B_c(0,\varrho_{0,\mu,\lambda})}\widetilde{\mathcal E}_{\lambda,\mu} < \widetilde{\mathcal E}_{\lambda,\mu}(w_{\mu,\lambda}) + \frac{1}{2}(\varrho_{0,\mu,\lambda}^2-\|w_{\mu,\lambda}\|^2),
$$
for every $u\in B_c(0,\varrho_{0,\mu,\lambda})$. Thus
\begin{equation}\label{John}
\mathcal E_{\lambda,\mu}(w_{\mu,\lambda}):=\frac{1}{2}\|w_{\mu,\lambda}\|^2 - \widetilde{\mathcal E}_{\lambda,\mu}(w_{\mu,\lambda})<\frac{\varrho_{0,\mu,\lambda}^2}{2}-\widetilde{\mathcal E}_{\lambda,\mu}(u)
\end{equation}
for every $u\in B_c(0,\varrho_{0,\mu,\lambda})$ and, on the other hand,
%Since $\varrho_{0,\mu,\lambda}<\bar{\varrho}_\mu$, the functional $\mathcal E_{\lambda,\mu}$ is sequentially weakly lower semicontinuous in $B_c(0,\varrho_{0,\mu,\lambda})$ and then it possesses a global minimum $u_{0,\mu,\lambda}$ in it. If $\|u_{0,\mu,\lambda}\|=\varrho_{0,\mu,\lambda}$
%by \eqref{John} one would have
\begin{equation*}
\mathcal E_{\lambda,\mu}(u_{0,\mu,\lambda})=\frac{1}{2}\varrho_{0,\mu,\lambda}^2 - \widetilde{\mathcal E}_{\lambda,\mu}(u_{0,\mu,\lambda})>\mathcal E_{\lambda,\mu}(w_{\mu,\lambda}),
\end{equation*}
which is a contradiction. In conclusion $u_{0,\mu,\lambda}$ is located inside $B_c(0,\varrho_{0,\mu,\lambda})$ and is therefore a local minimum for $\mathcal E_{\lambda,\mu}$ and a weak solution to \eqref{problema}.

The last issue left open is to show that $u_{0,\mu,\lambda}$ is not identically $0$ in $\Omega$. To this end, fix $\bar{u}\in C_0^2(\Omega)$, $\bar{u}\geq 0$, $\bar{u}\not\equiv 0$ in $\Omega$. Thanks to $(g_2)$, for every $M>0$ there exists $\delta>0$ so that
$$
G(t)\geq M t^2 \quad \mbox{for any } t\in(0,\delta).
$$
So if $t\in\left(0,\delta/\sup_{\Omega}\bar{u}\right)$, we obtain
\begin{align*}
\mathcal E_{\lambda,\mu}(t\bar{u}) & = \frac{1}{2}\left\|\bar{u}\right\|^2 t^2 -\frac{\mu}{2^*}\int_\Omega |\bar{u}|^{2^*} dx\; t^{2^*} - \lambda\int_\Omega G(t\bar{u})dx\\
& \leq  \left( \frac{1}{2}\left\|\bar{u}\right\|^2 -\lambda M \left\|\bar{u}\right\|^2_{L^2(\Omega)}\right) t^2 -\frac{\mu}{2^*}\int_\Omega |\bar{u}|^{2^*} dx\; t^{2^*}\\
&<0
\end{align*}
for $M>0$ big enough. Therefore $0$ is not a local minimum point for $\mathcal E_{\lambda,\mu}$ and $u_{0,\mu,\lambda}$ is non-trivial.
\qed

\begin{remark}
	\rm{
As one can deduce from the above proof, assumption $(g_2)$ only prevents $0$ from being a local minimum point for $\mathcal E_{\lambda,\mu}$. Therefore it can be replaced by any other assumption, compatible with $(g_1)$, which guarantees this fact. For instance, if $g(0)\neq 0$ then $0$ is not a solution to \eqref{problema} and so $u_{0,\lambda,\mu}$ is non-trivial.
}
\end{remark}

As already pointed out in the introduction, the reasoning illustrated before can be also performed when the perturbation affects also a term singular at zero. More precisely, consider the following singular variant of problem \eqref{problema}:
\begin{equation}\tag{$\widetilde{P}_{\lambda,\mu}$}\label{problemasingolare}
	\left\{
	\begin{array}{ll}
		-\mathcal L_K u = \mu u^{2^*-1} +\lambda \left( u^{r-1} + g(u)\right)  & \mbox{ in } \Omega\\
		u>0 & \mbox{ in } \Omega\\
		u=0 & \mbox{ in } \R^n\setminus \Omega,
	\end{array} \right.
\end{equation}
where $r\in(0,1)$ and $g:[0,+\infty)\to[0,+\infty)$ is continuous and subcritical. By analyzing the profile of an auxiliary rational function similar to \eqref{funzionehmu} we are able to assert that, for all $\mu$'s and for small values of $\lambda$, problem \eqref{problemasingolare} admits a weak solution in $X_0$.

In the context of singular problems we recall that a weak solution to \eqref{problemasingolare} is understood to be any $u\in X_0$ such that
\begin{itemize}
	\item $u>0$ a.e. in $\Omega$,
	\item $u^{r-1}v\in L^1(\Omega)$ for any $v\in X_0$,
	\item $\left\langle u,v\right\rangle -\displaystyle\mu\int_\Omega u^{2^*-1}v dx -\lambda\displaystyle\int_\Omega \left( u^{r-1} + g(u)\right) vdx =0$ for each $v\in X_0$.
\end{itemize}

The energy associated with \eqref{problemasingolare} turns out to be
$$
\mathcal E_{\lambda,\mu}(u):=\frac{1}{2}\left\|u\right\|^2 -\frac{\mu}{2^*}\int_\Omega (u^+)^{2^*}dx -\frac{\lambda}{r}\int_\Omega (u^+)^r dx -\lambda\int_\Omega G(u^+) dx,
$$
for all $u\in X_0$, where $u^+:=\max\left\lbrace u,0 \right\rbrace$.

In the wake of Theorem \ref{principale} we can prove the following
\begin{theorem}\label{princsingolare}
	Let $g:[0,+\infty)\to[0,+\infty)$ be a continuous function for which
	\begin{itemize}
		\item[$(g_1')$] there exist $a_1,a_2\geq 0$ and $q\in[1,2^*)$ such that
		$$
		g(t)\leq a_1 + a_2 t^{q-1} \quad  \mbox{ for all } t\geq 0.
		$$
	\end{itemize}
	Further, for any $\mu>0$ let $k_\mu:[0,+\infty)\to\R$ be the function defined by
	$$
	k_\mu (t):=\frac{t^{2-r}-\mu c_{2^*}^{2^*} t^{2^*-r}}{ c_{2^*}^r |\Omega|^\frac{2^*-r}{2^*} + a_1 c_{2^*}|\Omega|^\frac{2^*-1}{2^*} t^{1-r} + a_2 c_{2^*}^q |\Omega|^\frac{2^*-q}{2^*} t^{q-r}} \quad  \mbox{ for all } t\geq 0.
	$$
	Then, for all $\mu>0$ there exists an open interval
	$$
	\Lambda _\mu \subseteq \left(0,\max_{[0,+\infty)}k_\mu\right)
	$$
	such that, for every $\lambda\in\Lambda_\mu$, \eqref{problemasingolare} admits a weak solution $\tilde{u}_{0,\lambda,\mu}\in X_0$.
\end{theorem}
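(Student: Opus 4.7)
The plan is to repeat the proof of Theorem \ref{principale} with the auxiliary functional redefined to absorb the singular perturbation. Set
\[
\widetilde{\mathcal E}_{\lambda,\mu}(u):=\frac{\mu}{2^*}\int_\Omega (u^+)^{2^*}\,dx + \frac{\lambda}{r}\int_\Omega (u^+)^r\,dx + \lambda \int_\Omega G(u^+)\,dx,
\]
so that $\mathcal E_{\lambda,\mu}=\tfrac12\|\cdot\|^2-\widetilde{\mathcal E}_{\lambda,\mu}$. Lemma \ref{lemmasemicontinuita} carries over verbatim, its proof resting only on the algebraic inequality and on Br\'ezis--Lieb — both insensitive to the replacement $|u|\leadsto u^+$ — so that $u\mapsto\tfrac12\|u\|^2-\tfrac{\mu}{2^*}\int_\Omega(u^+)^{2^*}dx$ is sequentially weakly lower semicontinuous on $B_c(0,\bar\varrho_\mu)$ with $\bar\varrho_\mu$ as in \eqref{rhosegnatomu}. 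The two new summands of $\widetilde{\mathcal E}_{\lambda,\mu}$ are in fact weakly \emph{continuous} on $X_0$: the $G$-term by $(g_1')$ together with the compactness of $X_0\hookrightarrow L^q(\Omega)$, and the singular term via the elementary inequality $|a^r-b^r|\leq|a-b|^r$ valid for $a,b\geq 0$ and $r\in(0,1)$, H\"older's inequality, and the compact embedding $X_0\hookrightarrow L^1(\Omega)$. Hence Lemma \ref{proprietaEtilde} applies without modification.

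Fixing $\mu>0$ and setting $\varrho_{0,\mu}:=\min\{\bar\varrho_\mu,\varrho_{\mu,\max}\}$, where $\varrho_{\mu,\max}$ is the maximiser of $k_\mu$, one takes $\lambda\in\Lambda_\mu:=(0,k_\mu(\varrho_{0,\mu}))$ and picks $\varrho_{0,\mu,\lambda}\in(0,\varrho_{0,\mu})$ satisfying $\lambda<k_\mu(\varrho_{0,\mu,\lambda})$; a global minimum $\tilde u_{0,\mu,\lambda}$ of $\mathcal E_{\lambda,\mu}$ on $B_c(0,\varrho_{0,\mu,\lambda})$ exists by the previous paragraph. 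To show it sits in the interior of the ball I would rerun the contradiction argument of Theorem \ref{principale}: the upper bound for $\varphi_{\lambda,\mu}(\varepsilon,\varrho_{0,\mu,\lambda})$ now picks up the additional summand
\[
\frac{\lambda c_{2^*}^r}{r}|\Omega|^{\frac{2^*-r}{2^*}}\cdot\frac{\varrho_{0,\mu,\lambda}^r-(\varrho_{0,\mu,\lambda}-\varepsilon)^r}{\varepsilon}\xrightarrow[\varepsilon\to 0^+]{}\lambda c_{2^*}^r|\Omega|^{\frac{2^*-r}{2^*}}\varrho_{0,\mu,\lambda}^{r-1},
\]
and, after dividing through by $\varrho_{0,\mu,\lambda}^{1-r}$, the inequality $\lambda<k_\mu(\varrho_{0,\mu,\lambda})$ is \emph{exactly} the condition that enforces $\limsup_{\varepsilon\to 0^+}\varphi_{\lambda,\mu}(\varepsilon,\varrho_{0,\mu,\lambda})<\varrho_{0,\mu,\lambda}$; Lemma \ref{proprietaEtilde} then yields the desired contradiction, so $\tilde u_{0,\mu,\lambda}$ is an interior local minimum of $\mathcal E_{\lambda,\mu}$.

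Non-triviality is automatic from the singular term: for $\bar u\in C_0^2(\Omega)$, $\bar u\geq 0$, $\bar u\not\equiv 0$ and $t>0$ small,
\[
\mathcal E_{\lambda,\mu}(t\bar u)\leq \frac{t^2}{2}\|\bar u\|^2 - \frac{\lambda t^r}{r}\int_\Omega \bar u^r\,dx<0,
\]
since $r<2$ and the dropped terms $-(\mu/2^*)\int(t\bar u)^{2^*}$ and $-\lambda\int G(t\bar u)$ are non-positive by the sign assumption on $g$; thus $0$ is not the minimum. The pointwise inequality $(u^+(x)-u^+(y))(u^-(x)-u^-(y))\leq 0$ yields $\|u^+\|\leq\|u\|$ in $X_0$, so we may assume $\tilde u_{0,\mu,\lambda}\geq 0$.

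The main obstacle lies in the last step, namely passing from local minimiser to the weak formulation of \eqref{problemasingolare}: the map $u\mapsto\int_\Omega(u^+)^r\,dx$ is not differentiable at $0$, so being a local minimum does not directly produce an Euler--Lagrange identity. I would follow the now-standard strategy for singular critical problems: first derive an a priori lower bound $\tilde u_{0,\mu,\lambda}(x)\gtrsim\mathrm{dist}(x,\partial\Omega)^s$ via the (fractional) strong maximum principle for $\mathcal L_K$, which simultaneously secures positivity in $\Omega$ and the integrability $\tilde u_{0,\mu,\lambda}^{r-1}v\in L^1(\Omega)$ for every $v\in X_0$; then combine Fatou's lemma applied to non-negative variations $\tilde u_{0,\mu,\lambda}+t\varphi$ with a dominated-convergence argument on admissible negative variations of the form $(\tilde u_{0,\mu,\lambda}-t\varphi)^+-\tilde u_{0,\mu,\lambda}$ to extract the full integral identity.
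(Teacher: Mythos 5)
Your proposal follows essentially the same route as the paper: redefine $\widehat{\mathcal E}_\mu$ and $\widetilde{\mathcal E}_{\lambda,\mu}$ with $u^+$ and the extra singular summand, check that Lemmas \ref{lemmasemicontinuita} and \ref{proprietaEtilde} still apply, rerun the contradiction argument of Theorem \ref{principale} (your computation confirming that $\lambda<k_\mu(\varrho_{0,\mu,\lambda})$ is precisely the condition forcing $\limsup_{\varepsilon\to 0^+}\varphi_{\lambda,\mu}(\varepsilon,\varrho_{0,\mu,\lambda})<\varrho_{0,\mu,\lambda}$ is correct), and use the $t^r$ term to rule out the trivial minimum. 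The only step neither argument carries out in full is the passage from interior local minimiser to the weak formulation of the singular problem: the paper defers it to \cite{farfar2015a}, and your sketched maximum-principle/Fatou strategy is the same device used there, so your write-up is if anything more detailed than the paper's own.
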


\begin{proof}
	It is clear that for all $\lambda,\mu>0$ and $u\in X_0$ the functionals
	\begin{align*}
		\widehat{\mathcal E}_\mu(u) &:=\frac{1}{2}\left\|u\right\|^2 -\frac{\mu}{2^*}\int_\Omega (u^+)^{2^*} dx,\\
		\widetilde{\mathcal E}_{\lambda,\mu}(u)	&:=\frac{\mu}{2^*}\int_\Omega (u^+)^{2^*} dx +\frac{\lambda}{r}\int_\Omega (u^+)^r dx +\lambda\int_\Omega G(u^+) dx
	\end{align*}
	fulfill	Lemmas \ref{lemmasemicontinuita} and \ref{proprietaEtilde}, respectively.

	Arguing exactly as in Theorem \ref{principale} we deduce that $\mathcal E_{\lambda,\mu}$ admits a minimum $\tilde{u}_{0,\lambda,\mu}$ inside a sufficiently small ball of $X_0$. It is easily seen that such a minimum does not coincide with $0$; indeed fixing $u\in X_0$, $u>0$ in $\Omega$, if $t>0$ one has
	\begin{align*}
	\mathcal E_{\lambda,\mu}(tu) &\leq \frac{1}{2}\left\| u\right\|^2 t^2 - \frac{\mu}{2^*}\left\| u\right\|_{L^{2^*}(\Omega)}^{2^*}t^{2^*} +\lambda a_1\left\|u\right\|_{L^1(\Omega)} t \\
	& \quad +\frac{\lambda a_2}{q}\left\| u\right\|_{L^q(\Omega)}^q t^q -\lambda\frac{t^r}{r}\int_\Omega |u|^r dx, \;
	\end{align*}
	and hence $\mathcal E_{\lambda,\mu}(tu)$ is negative for $t$ small enough. The fact that $\tilde{u}_{0,\lambda,\mu}$ is a weak solution to \eqref{problemasingolare} can be inferred in a vein similar to \cite{farfar2015a}.
\end{proof}

\begin{example}
	\rm{A simple application of Theorem \ref{princsingolare} is the coming one. Let $n=3$, $\Omega = B\left( 0,(3/(4\pi))^{1/3}\right) \subset\R^3$, $g\equiv 0$ and $r=1/2$.

Then, the study of the function
$$
k_\mu (t)=c_{2^*}^{-\frac{1}{2}}\left(t^\frac{3}{2}-\mu c_{2^*}^{2^*}t^\frac{2\cdot 2^* -1}{2}\right), \quad t\geq 0,
$$
where $2^*=6/(3-2s)$, allows us to deduce that for any $\mu>0$ there exists an open interval
$$
\Lambda_\mu \subseteq \left( 0, c_{2^*}^{-\frac{1}{2}}\left( \left(\frac{3}{(2\cdot 2^* -1)\mu c_{2^*}^{2^*}}\right)^\frac{3}{2(2^*-2)}-\mu c_{2^*}^{2^*}\left(\frac{3}{(2\cdot 2^* -1)\mu c_{2^*}^{2^*}}\right)^\frac{2\cdot 2^* -1}{2(2^*-2)}\right)\right)
$$
such that for each $\lambda\in\Lambda_\mu$, the problem
$$
\left\{
\begin{array}{ll}
-\mathcal L_K u = \mu u^{2^*-1} + \lambda u^{-\frac{1}{2}}  & \mbox{ in } \Omega\\
u=0 & \mbox{ in } \R^n\setminus \Omega,
\end{array} \right.
$$	
admits a weak solution.
}
\end{example}

\begin{remark}\label{Costante}
\rm{
A crucial step in our approach is the explicit computation of the critical embedding constants that naturally appear in Theorems \ref{principale} and \ref{princsingolare}, as well as in their consequences (Theorems \ref{casoparticolare} and \ref{casoparticolaresingolare} in Introduction). We notice that, in the fractional Laplacian setting, following Cotsiolis and Tavoularis in \cite[Theorem 1.1]{cottav2004best}, one has
$$
\|u\|_{2^*_s}\leq 2^{-s}\pi^{-s/2}\left[\frac{\Gamma\left(\displaystyle\frac{n-2s}{2}\right)}{\Gamma\left(\displaystyle\frac{n+2s}{2}\right)}\right]^{1/2}\left[\frac{\Gamma(n)}{\Gamma\left(\displaystyle\frac{n}{2}\right)}\right]^{s/n}
\left\| u\right\|_{X_{0,s}},\quad\,\forall u\in X_{0,s}
$$
where
$$
\Gamma(\tau):=\int_0^{+\infty}z^{\tau-1}e^{-z}dz, \quad\forall \tau>0.
$$
}
\end{remark}
	
%%%%%%%%%%%%%%%%%%%%%%%%%%%%%%%%%%%%%%%%%%%%%%%%%%%
\section{A mountain-pass approach}\label{mountainpass}
In this short section we show that conveniently modifying the assumptions on our data it is possible, for small values of $\mu$ and arbitrary ones of $\lambda$, to deduce the existence of one non-trivial non-negative solution of mountain-pass-type, retrieving the approach of \cite{serval2015the}.

As usual in the presence of critical nonlinearities, the key point is that the "altitude" of the mountain pass lies under a certain threshold involving the constant \eqref{S} and this indeed occurs for small $\mu$ (see assumption $(H_1)$ below, that for $\mu=1$ gives back assumption (1.25) of \cite[Theorem 1]{serval2015the}).

\begin{theorem}\label{secondoteorema}
Let $g:\R\to\R$ be a continuous function satisfying $(g_1)$ and
\begin{itemize}
\item[$(g_3)$] $\displaystyle\lim_{t\to 0}\frac{g(t)}{t}=0$.
\end{itemize}
Furthermore for any $\mu\in\left(0,n/(n-2s) \right)$ assume that there exists $u^\star\in X_0\setminus\{0\}$, $u^\star\geq 0$ a.e. in $\R^n$, such that
\begin{itemize}
\item[$(H_1)$] $\displaystyle\sup_{\tau\geq 0}\mathcal E_{\lambda,\mu}(\tau u^\star)<\frac{n-\mu(n-2s)}{2n}\mu^{-\frac{n-2s}{2s}}S^\frac{n}{2s}$.	
\end{itemize}
Then for all $\lambda>0$ problem \eqref{problema} admits a non-trivial non-negative weak solution.	
\end{theorem}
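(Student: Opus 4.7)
The plan is to produce a non-trivial non-negative critical point of $\mathcal{E}_{\lambda,\mu}$ via the mountain-pass theorem, adapting the scheme of \cite{serval2015the}. To force non-negativity, I would first truncate and work with the $C^1$ functional
$$\mathcal{E}^+_{\lambda,\mu}(u):=\frac{1}{2}\|u\|^2-\frac{\mu}{2^*}\int_\Omega (u^+)^{2^*}\,dx-\lambda\int_\Omega G(u^+)\,dx,$$
whose critical points are non-negative in $\Omega$ (test the Euler--Lagrange equation with $u^-$ and use the pointwise inequality $(u(x)-u(y))(u^-(x)-u^-(y))\le -|u^-(x)-u^-(y)|^2$) and hence weak solutions of \eqref{problema}.

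\textbf{Mountain-pass geometry.} Coupling $(g_3)$ with $(g_1)$ yields, for every $\varepsilon>0$, a constant $C_\varepsilon>0$ with $|G(t)|\le\varepsilon t^2+C_\varepsilon|t|^q$, so the embeddings $X_0\hookrightarrow L^2(\Omega),\,L^q(\Omega),\,L^{2^*}(\Omega)$ give
$$\mathcal{E}^+_{\lambda,\mu}(u)\ge\left(\tfrac{1}{2}-\lambda\varepsilon c_2^2\right)\|u\|^2-\lambda C_\varepsilon c_q^q\|u\|^q-\frac{\mu c_{2^*}^{2^*}}{2^*}\|u\|^{2^*}.$$
Choosing $\varepsilon$ small and restricting to $\|u\|=\rho$ small produces constants $\rho,\alpha>0$ with $\inf_{\|u\|=\rho}\mathcal{E}^+_{\lambda,\mu}(u)\ge\alpha$. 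Since $u^\star\ge 0$, $u^\star\not\equiv 0$ and $q,2<2^*$, one has $\mathcal{E}_{\lambda,\mu}(\tau u^\star)=\mathcal{E}^+_{\lambda,\mu}(\tau u^\star)\to -\infty$ as $\tau\to+\infty$, so some $e:=\tau_0 u^\star$ lies outside $B(0,\rho)$ with $\mathcal{E}^+_{\lambda,\mu}(e)\le 0$. Bounding the min-max level
$$c:=\inf_{\gamma\in\Gamma}\max_{t\in[0,1]}\mathcal{E}^+_{\lambda,\mu}(\gamma(t)),\qquad \Gamma:=\{\gamma\in C([0,1];X_0):\gamma(0)=0,\;\gamma(1)=e\},$$
along the straight path $t\mapsto te$, on which $\mathcal{E}^+_{\lambda,\mu}$ agrees with $\mathcal{E}_{\lambda,\mu}$, and invoking $(H_1)$, one obtains
$$0<\alpha\le c\le\sup_{\tau\ge 0}\mathcal{E}_{\lambda,\mu}(\tau u^\star)<\frac{n-\mu(n-2s)}{2n}\,\mu^{-\frac{n-2s}{2s}}S^{\frac{n}{2s}}.$$

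\textbf{Local Palais--Smale condition.} The decisive step is to show that every $(PS)_c$ sequence $\{u_j\}\subset X_0$ with $c$ strictly below the threshold of $(H_1)$ admits a strongly convergent subsequence. Standard arguments based on $(g_1),(g_3)$ and on a convex combination of $\mathcal{E}^+_{\lambda,\mu}(u_j)$ with $\langle d\mathcal{E}^+_{\lambda,\mu}(u_j),u_j\rangle$ yield boundedness of $\{u_j\}$ in $X_0$; hence $u_j\rightharpoonup u_\infty$ weakly in $X_0$ and $u_j\to u_\infty$ strongly in $L^p(\Omega)$ for every $p\in[1,2^*)$, so that, by strong continuity of the subcritical pieces, $u_\infty$ turns out to be a critical point of $\mathcal{E}^+_{\lambda,\mu}$. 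Applying the fractional Brezis--Lieb decomposition to both $\|\cdot\|^2$ and $\|\cdot\|_{L^{2^*}}^{2^*}$ and combining with $\langle d\mathcal{E}^+_{\lambda,\mu}(u_j),u_j-u_\infty\rangle\to 0$ produces $\|u_j-u_\infty\|^2-\mu\|u_j-u_\infty\|_{L^{2^*}}^{2^*}\to 0$; paired with the Sobolev inequality $S\|\cdot\|_{L^{2^*}}^2\le\|\cdot\|^2$, this forces the familiar dichotomy: either $\|u_j-u_\infty\|\to 0$ or the concentration mass $b:=\lim\|u_j-u_\infty\|^2$ obeys $b\ge S^{n/(2s)}\mu^{-(n-2s)/(2s)}$. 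Inserting this lower bound into the Brezis--Lieb expansion of the energy along $u_j$ and matching it against the upper bound on $c$ from $(H_1)$—which is positive precisely because $\mu<n/(n-2s)$—rules out concentration and delivers $u_j\to u_\infty$ strongly.

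\textbf{Conclusion.} The mountain-pass theorem together with the local $(PS)_c$ condition yields a critical point $u\in X_0$ of $\mathcal{E}^+_{\lambda,\mu}$ at level $c\ge\alpha>0$, hence $u\not\equiv 0$; the truncation built into $\mathcal{E}^+_{\lambda,\mu}$ guarantees $u\ge 0$ a.e.\ in $\Omega$, and $u$ is the desired non-trivial non-negative weak solution of \eqref{problema}. The hard part is the local Palais--Smale step: handling the fractional Brezis--Lieb decomposition, the sharp Sobolev inequality, the geometric bound supplied by $(H_1)$, and the restriction $\mu<n/(n-2s)$ simultaneously, so as to prevent the defect of compactness from arising, in the spirit of the calculations in \cite{serval2015the}.
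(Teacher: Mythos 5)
Your scheme coincides with the paper's up to the compactness step, and your explicit truncation $\mathcal E^{+}_{\lambda,\mu}$ to enforce non-negativity is actually a more careful treatment of that point than the paper gives. The divergence --- and the gap --- lies in the ``local Palais--Smale condition''. The paper never proves strong convergence of the Palais--Smale sequence: it extracts a weak limit $u_\infty$, shows \emph{directly from weak convergence} that $u_\infty$ is a weak solution (using only $|u_j|^{2^*-2}u_j\rightharpoonup|u_\infty|^{2^*-2}u_\infty$ and $g(u_j)\rightharpoonup g(u_\infty)$ in $L^{\frac{2^*}{2^*-1}}(\Omega)$), and invokes the threshold of $(H_1)$ only to exclude the single bad case $u_\infty\equiv0$. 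Your plan to exclude concentration for a \emph{general} weak limit rests on the expansion
$$
c=\mathcal E_{\lambda,\mu}(u_\infty)+\Big(\frac12-\frac{1}{2^*}\Big)b+o(1),\qquad b:=\lim_{j}\|u_j-u_\infty\|^2,
$$
and therefore needs a lower bound on $\mathcal E_{\lambda,\mu}(u_\infty)$, typically $\mathcal E_{\lambda,\mu}(u_\infty)\geq0$. Writing $\mathcal E_{\lambda,\mu}(u_\infty)=\mathcal E_{\lambda,\mu}(u_\infty)-\frac12\langle \mathcal E'_{\lambda,\mu}(u_\infty),u_\infty\rangle$, this reduces to bounding $\lambda\int_\Omega\big(\tfrac12 g(u_\infty)u_\infty-G(u_\infty)\big)dx$ from below, which is not available here: only $(g_1)$ and $(g_3)$ are assumed, there is no Ambrosetti--Rabinowitz or sign condition, and $\tfrac12 g(t)t-G(t)$ may be negative and large in modulus. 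The paper's detour through the case $u_\infty\equiv0$ is precisely what kills this term, since then the compact embeddings together with $(g_1)$ force all subcritical integrals along the sequence to vanish.

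There is also a quantitative problem with ``matching against the upper bound'': the concentration alternative gives $b\geq\mu^{-\frac{n-2s}{2s}}S^{\frac{n}{2s}}$ and hence, since $\frac12-\frac{1}{2^*}=\frac{s}{n}$, only $c\geq\frac{s}{n}\mu^{-\frac{n-2s}{2s}}S^{\frac{n}{2s}}$ (even granting $\mathcal E_{\lambda,\mu}(u_\infty)\geq0$). This contradicts $(H_1)$ only when $\frac{s}{n}\geq\frac{n-\mu(n-2s)}{2n}$, i.e.\ when $\mu\geq1$; for $\mu\in(0,1)$ the right-hand side of $(H_1)$ exceeds $\frac{s}{n}\mu^{-\frac{n-2s}{2s}}S^{\frac{n}{2s}}$ and no contradiction follows. (The same arithmetic deserves a check in the paper's own display \eqref{c}: with $\|u_j\|^2\to L$ and $\int_\Omega|u_j|^{2^*}dx\to L/\mu$ the coefficient of $L$ is $\frac12-\frac{1}{2^*}$, not $\frac12-\frac{\mu}{2^*}$.) The repair is to abandon the strong-convergence claim: keep your boundedness and weak-convergence steps, identify $u_\infty$ as a weak solution of \eqref{problema} by passing to the limit in $\langle\mathcal E'_{\lambda,\mu}(u_j),v\rangle$ for fixed $v\in X_0$, and run the Br\'ezis--Lieb/Sobolev computation only under the hypothesis $u_\infty\equiv0$, as in \cite{brenir1983positive} and \cite{serval2015the}.
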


\begin{proof}
The proof relies upon the same arguments as \cite{serval2015the}; we only sketch the most notable steps and point out the differences.

Fix $\mu\in\left(0,n/(n-2s) \right)$ and $\lambda\in(0,+\infty)$. Assumptions $(g_1)$ and $(g_3)$, together with $(H_1)$, allow us to infer the mountain pass geometry for $\mathcal E_{\lambda,\mu}$, i.e. that
\begin{itemize}
\item[$(i)$] there exist $\varrho>0$ and $\alpha>0$ such that for any $u\in \partial B(0,\varrho)$ one has $\mathcal E_{\lambda,\mu}(u)\geq\alpha$; \item[$(ii)$] there exists $u_1\in X_0$ such that $u_1\geq 0$ a.e. in $\R^n$,
$\|u_1\|>\varrho$ and $\mathcal E_{\lambda,\mu}(u_1)<\alpha$.
\end{itemize}
Setting
\begin{equation}\label{quotaMP}
c:=\inf_{\gamma\in \Gamma}\sup_{t\in[0,1]}\mathcal E_{\lambda,\mu}(\gamma(t)),
\end{equation}
where
$$
\Gamma:=\{\gamma\in C^0([0,1], X_0) : \gamma(0)=0 \mbox{ and } \gamma(1)=u_1\},
$$
on account of $(H_1)$ one has the estimate
\begin{equation}\label{relazionec}
\alpha \leq c < \frac{n-\mu(n-2s)}{2n}\mu^{-\frac{n-2s}{2s}} S^\frac{n}{2s}.
\end{equation}

Now, Theorem 2.2 of \cite{brenir1983positive} assures the existence of a Palais-Smale sequence $\{u_j\}_{j\in\N}\subset X_0$ at the level $c$ for $\mathcal E_{\lambda,\mu}$, namely a sequence fulfilling
\begin{equation}\label{quasipuntocritico}
\mathcal E_{\lambda,\mu}(u_j)\to c, \quad \mathcal E'_{\lambda,\mu}(u_j)\to 0, \quad \mbox{as } j\to\infty
\end{equation}
and, in view of $(g_1)$, this is bounded in $X_0$. So, up to a subsequence still denoted by $\{u_j\}_{j\in\N}$, there exists $u_\infty \in X_0$
such that $u_j \rightharpoonup u_\infty$, that is
\begin{equation}\label{convergenze0}
\left\langle u_j,v\right\rangle  \to \left\langle u_\infty,v\right\rangle \quad \mbox{as } j\to\infty
\end{equation}
for any $v\in X_0$. The fact that $\{u_j\}_{j\in\N}$ is bounded in $L^{2^*}(\Omega)$ as well implies that, passing to a further subsequence,
\begin{equation}\label{convergenze}
\begin{split}
& u_j \rightharpoonup u_\infty \quad \mbox{ in } L^{2^*}(\R^n)\\
& u_j \to u_\infty \quad \mbox{ in } L^p(\R^n), \quad p\in[1,2^*) \\
& u_j \to u_\infty \quad \mbox{ a.e. in } \R^n\\
& |u_j|^{2^*-2}u_j \rightharpoonup  |u_\infty|^{2^*-2}u_\infty \quad \mbox{ in } L^\frac{2^*}{2^*-1}(\Omega)
\end{split}
\end{equation}
as $j\to\infty$. Moreover, assumptions $(g_1), (g_3)$ tell us that for every $\varepsilon>0$ there exists $\kappa_\varepsilon>0$ such that the inequality
\begin{equation}\label{disuggduestar}
|g(t)|\leq \varepsilon |t|^{2^*-1} + \kappa_\varepsilon
\end{equation}
holds for every $t\in\R$ and, together with \eqref{convergenze}, this leads to
\begin{align*}\label{convfae}
& g(u_j(\cdot)) \to  g(u_\infty(\cdot)) \quad \mbox{ a.e. in } \Omega\\
& g(u_j(\cdot)) \rightharpoonup  g(u_\infty(\cdot)) \quad \mbox{ in }  L^\frac{2^*}{2^*-1}(\Omega),
\end{align*}
as $j\to\infty$. As a result,
\begin{equation}\label{convffi1}
\int_\Omega g(u_j)v dx \to  \int_\Omega g(u_\infty)v dx \quad \mbox{as } j\to\infty
\end{equation}
for any $v\in X_0$. Now, recalling that
$$
\left\langle \mathcal E'_{\lambda,\mu}(u_j),v\right\rangle = \left\langle u_j,v\right\rangle  - \mu\int_\Omega |u_j|^{2^*-2}u_j v dx - \lambda \int_\Omega g(u_j) v dx
$$
for any $v\in X_0$, passing to the limit as $j\to\infty$ into the above equality and taking  \eqref{quasipuntocritico}, \eqref{convergenze0}, \eqref{convergenze} and \eqref{convffi1} into account, we get
$$
\left\langle u_\infty,v\right\rangle - \mu\int_\Omega |u_\infty|^{2^*-2}u_\infty v dx - \lambda \int_\Omega g(u_\infty) v dx = 0
$$
for any $v \in X_0$, so $u_\infty$ is a weak solution to \eqref{problema}.

Finally, suppose that $u_\infty \equiv 0$ in $\Omega$. Then, due to  \eqref{convergenze} and \eqref{disuggduestar}, we would obtain that
\begin{equation}\label{fujlim}
\lim_{j\to\infty}\int_\Omega g(u_j) u_j dx = \lim_{j\to\infty}\int_\Omega G(u_j) dx = 0
\end{equation}
and therefore, since $\left\langle \mathcal E'_{\lambda,\mu}(u_j), u_j\right\rangle \to 0$ as $j\to\infty$, %thanks to \eqref{convergenze} and \eqref{fujlim},
that
\begin{equation}\label{nonso}
\left\| u_j\right\|^2  - \mu\int_\Omega |u_j|^{2^*} dx \to 0 \quad \mbox{as } j\to\infty.
\end{equation}
Now, the boundedness of $\{\|u_j\|\}_{j\in\N}$ in $\R$ implies that, up to a subsequence, there exists $L\in[0,+\infty)$ such that
\begin{equation}\label{L1}
\|u_j\|^2 \to L
\end{equation}
and so
\begin{equation}\label{L2}
\int_\Omega |u_j|^{2^*} dx\to \frac{L}{\mu}
\end{equation}
as $j\to +\infty$. Recalling that
$$
\mathcal E_{\lambda,\mu}(u_j)=\frac 1 2\left\| u_j\right\|^2 -\frac{\mu}{2^*} \int_\Omega |u_j|^{2^*} dx -
\lambda\int_\Omega G(u_j)dx \to c \quad \mbox{as } j\to\infty,
$$
on account of \eqref{fujlim}, \eqref{L1} and \eqref{L2}, it follows that
\begin{equation}\label{c}
c=\left( \frac 1 2 -\frac{\mu}{2^*}\right) L=\frac{n-\mu(n-2s)}{2n} L,
\end{equation}
which being $c\geq \alpha>0$ forces $L>0$. On the other hand,
$$
\left\| u_j\right\|^2 \geq S \|u_j\|_{L^{2^*}(\Omega)}^2
$$
so that, passing to the limit as $j\to\infty$ and taking  \eqref{L1} and \eqref{L2} into account, we get
$$
L\geq S \left(\frac{L}{\mu}\right)^\frac{2}{2^*},
$$
which combined with \eqref{c} gives
$$
c\geq \frac{n-\mu(n-2s)}{2n} \mu^{-\frac{2}{2^*-2}} S^\frac{2^*}{2^*-2} = \frac{n-\mu(n-2s)}{2n} \mu^{-\frac{n-2s}{2s}} S^\frac{n}{2s}.
$$
This contradicts \eqref{relazionec} and therefore $u_\infty\not\equiv 0$ in $\Omega$. The proof is now complete.
\end{proof}

\noindent {\bf Acknowledgements.}
The manuscript was realized under the auspices of the
INdAM - GNAMPA Project 2016 titled: {\it Problemi variazionali su variet\`a Riemanniane e gruppi di Carnot}.

%%%%%%%%%%%%%%%%%%%%%%%%%%%%%%%%%%%%%%%%%%%%%%%%%%%%%%%%
%\nocite{haj2013on,haj2016a,haj2016sharp,haj2016symmetry}

%\bibliographystyle{plain}
%\bibliography{databiblio}
\end{document}